\newcommand{\bR}{\mathbb{R}}
\newcommand{\bE}{\mathbb{E}}
\newcommand{\bN}{\mathbb{N}}
\newcommand{\bNp}{\mathbb{N}_+}
\newcommand{\calB}{\mathcal{B}}
\newcommand{\calL}{\mathcal{L}}
\newcommand{\calF}{\mathcal{F}}
\newcommand{\calS}{\mathcal{S}}
\newcommand{\olB}{\overline{B}}
\newcommand{\norm}[1]{\left\Vert #1\right\Vert}
\newcommand{\gt}{\rightarrow}
\newcommand{\Or}{\mathcal{O}}
\newcommand{\mc}[1]{\mathcal{#1}}
\newcommand{\eps}{\epsilon}
\newlength{\leftstackrelawd}
\newlength{\leftstackrelbwd}
\def\leftstackrel#1#2{\settowidth{\leftstackrelawd}%
{${{}^{#1}}$}\settowidth{\leftstackrelbwd}{$#2$}%
\addtolength{\leftstackrelawd}{-\leftstackrelbwd}%
\leavevmode\ifthenelse{\lengthtest{\leftstackrelawd>0pt}}%
{\kern-.5\leftstackrelawd}{}\mathrel{\mathop{#2}\limits^{#1}}}
\numberwithin{equation}{section}
\newtheorem{theorem}{Theorem}[section]
\newtheorem{lemma}[theorem]{Lemma}
\newtheorem{proposition}[theorem]{Proposition}
\newtheorem{assumption}[theorem]{Assumption}
\newtheorem{definition}[theorem]{Definition}
\newtheorem{corollary}[theorem]{Corollary}
\title[On the Representation of Solutions to Elliptic PDEs in Barron Spaces]{On the Representation of Solutions to Elliptic PDEs in Barron Spaces}
\author{Ziang Chen}
\address{(ZC) Department of Mathematics, Duke University, Box 90320, Durham, NC 27708.}
\email{ziang@math.duke.edu}
\author{Jianfeng Lu}
\address{(JL) Departments of Mathematics, Physics, and Chemistry, Duke University, Box 90320, Durham, NC 27708.}
\email{jianfeng@math.duke.edu}
\author{Yulong Lu}
\address{(YL)  Department of Mathematics and Statistics, Lederle Graduate Research Tower, University of Massachusetts, 710 N. Pleasant Street, Amherst, MA 01003.}
\email{lu@math.umass.edu}
\date{\today}
\thanks{The work of Z.C. and J.L.~is supported in part by the National Science Foundation via grants DMS-2012286 and CCF-1934964. Y.L. thanks the National Science foundation for its support through the award DMS-2107934.}
\begin{document}

\begin{abstract}
  Numerical solutions to high-dimensional partial differential equations (PDEs) based on neural networks have seen exciting developments. This paper derives complexity estimates of the solutions of $d$-dimensional second-order elliptic PDEs in the Barron space, that is a set of functions admitting the integral of certain parametric ridge function against a probability measure on the parameters. We prove under some appropriate assumptions that if the coefficients and the source term of the elliptic PDE lie in Barron spaces, then the solution of the PDE is $\epsilon$-close with respect to the $H^1$ norm to a Barron function. Moreover, we prove dimension-explicit bounds for the Barron norm of this approximate solution, depending at most polynomially on the dimension $d$ of the PDE. As a direct consequence of the complexity estimates, the solution of the PDE can be approximated on any bounded domain by a two-layer neural network with respect to the $H^1$ norm with a dimension-explicit convergence rate.
\end{abstract}

\maketitle

\section{Introduction}

Inspired by the tremendous success of deep learning in diverse machine learning tasks including image classification, natural language processing, and artificial intelligence, there has been growing interest in exploring scientific and engineering applications of deep learning \cites{senior2020improved, noe2019boltzmann, raissi2020hidden, li2020fourier, zhang2018deep}. As partial differential equations (PDEs) play a fundamental role in almost all branches of sciences and engineering, numerical solutions to PDE problems based on neural networks have become an important research direction in scientific machine learning \cites{lagaris1998artificial,dissanayake1994neural,khoo2017solving,han2018solving,weinan2017deep,weinan2018deep, khoo2019solving}. Among the various directions, numerical solutions to high-dimensional PDEs -- the unknown function depending on many variables -- are perhaps the most exciting possibility, as solving such PDEs has been a long-standing challenge and breakthrough would lead to tremendous progress in fields such as many-body physics \cites{carleo2017solving,gao2017efficient,hermann2020deep}, multiple agent control \cites{ruthotto2020machine,han2018solving}, just to name a few. 

Numerical solutions to low-dimensional PDEs, such as Navier-Stokes equation in fluid dynamics, has become a standard practice after decades of work. However, the computational cost of the conventional numerical methods for  PDEs grows exponentially with the dimension, as a manifestation of the curse of dimensionality (CoD). Given a target accuracy $\eps$, conventional methods, such as finite element or finite difference, would need a mesh size of $\Or(\eps)$, and thus degree of freedom on the order of $\Or(\eps^{-d})$, where $d$ is the dimension of the problem. Such complexity severely limits the numerical solutions to PDEs in high dimension, such as the many-body Schr\"odinger equations from quantum mechanics and the high-dimensional Hamilton-Jacobi-Bellman equations from control theory. 
Neural networks, in particular deep neural networks, provide a promising way to overcome the CoD in representing functions in high dimension. It is thus a natural idea to parametrize the solution ansatz to a PDE as neural networks and to employ variational search for the optimal parameters. Various neural network methods \cites{lagaris1998artificial,dissanayake1994neural,weinan2017deep,weinan2018deep,raissi2019physics,han2018solving,sirignano2018dgm,zang2020weak,gu2020selectnet,chen2020friedrichs} for PDEs have been proposed recently and some of them have demonstrated great empirical success in solving PDEs of hundreds and thousands of dimensions \cites{weinan2017deep,weinan2018deep,han2018solving}, much beyond the capability of conventional approaches. 
  Question remains though on theoretical analysis of such neural-network based methods for solving high-dimensional PDEs.  While there have been some recent progress on approaches including physics-informed neural networks \cites{shin2020convergence,mishra2020estimates,shin2020error}  and the deep Ritz method \cites{lu2021priori,lu2021priori2},
 many questions still remain open. Among them, a fundamental question is 
 
 {\em Whether the solution of a high-dimensional PDE can be efficiently approximated by a neural network, and if so, how to quantify the complexity of the neural network representation with respect to the increasing dimension?}

\paragraph{\textbf{Our contributions}} The focus of the current study takes a functional-analytic approach to this question. Namely, we identify a function class suitable for neural network approximations and prove that the solutions to a class of PDEs can be well approximated by functions in this class. More specifically, the PDE we consider is a family of second-order elliptic PDEs of the form 
\begin{equation}\label{PDE}
    \calL u=-\nabla\cdot (A\nabla u)+c u=f \text{ on }\bR^d.
\end{equation}
We choose to work with the Barron class of functions  defined in \cite{E19} (see also \cite{Bach17}), which is a class of functions admitting the integral of certain parametric ridge function against a probability measure on the parameters; see Definition \ref{def:barron} for a precise description. This Barron space is inspired by the pioneering work by Barron \cite{Barron93}, where he proved that a class of functions whose Fourier transform has the first order moment can be approximated by two-layer networks without CoD. 
The main result of our work, stated informally, is the following; a more precise statement can be found in Section \ref{sec:mainthm}.

\paragraph{\textbf{Main Theorem (informal version)}} If the coefficients $A,c$ and the source term $f$ of the second-order elliptic PDE \eqref{PDE} are all Barron functions, then the solution $u^\ast$ can be approximated by another Barron function $u$ such that $\|u-u^\ast\|_{H^1} \leq \eps$, where the Barron norm of $u$ is upper bounded by $\Or((d/\eps)^{C \log (1/\eps)})$. Moreover, if the Barron space is defined by the cosine activation function, then the upper bound on the Barron norm can be improved to $\Or(d^{C \log 1/\eps})$.

We note that while the better rate is only obtained for the cosine activation function, such periodic activation function has indeed been found effective in certain PDE related tasks, see e.g., \cite{sitzmann2020implicit}.

Since the Barron functions can be approximated on a finite domain $\Omega$ w.r.t. $H^1$ norm by two-layer neural networks with a rate $\Or(1/\sqrt{k})$ where $k$ is the network width (see Theorem \ref{thm:approx}), the theorem above directly implies that there exists a two-layer network $u_k$ with the number of widths $ k =\Or((d/\eps)^{C \log (1/\eps)})$, or $k=\Or(d^{C \log 1/\eps})$ if the activation function is cosine, such that $\|u_k-u^\ast\|_{H^1(\Omega)} \leq \eps$. Therefore in our setting the solution can be approximated by a two-layer neural network without CoD, namely the complexity depends at most polynomially on the dimension $d$ for fixed $\eps$. Alternatively, we can rewrite the rates as $\Or((1/\eps)^{C (\log d+\log 1/\eps)})$ and $\Or((1/\eps)^{C \log d})$ to contrast with that of conventional grid-based numerical methods for PDEs, which scales as $\Or((1/\eps)^d)$. We observe that the dependence on $d$ is replaced with $\log d$ in the complexity bound for neural network approximations. 

We emphasize that such approximation result does not follow directly from the universal approximation property of neural networks for Barron functions since it is not a priori known that the solution to the PDE is a Barron function.
In fact, directly imposing regularity or complexity assumption on the solution itself is unreasonable since the solution is unknown and its fine properties are generally inaccessible. 
Our main contribution is to establish the fact that the solution can be indeed approximated by a Barron function, under the assumption that coefficients and the right hand term of the PDE are Barron. From a mathematical point of view, our main theorem is in the same spirit as regularity estimates of PDEs, which are of crucial importance in the study of PDEs. While such regularity estimates are well developed in low dimension, the extension to results in high dimension is highly non-trivial and is the main focus of our work.

\paragraph{\textbf{Related works}} 
Several theoretical work have been devoted to the above representation question. It has been established in \cites{grohs2018proof,hutzenthaler2020proof,grohs2020deep} that deep neural networks can approximate solutions to certain class of parabolic equations and Poisson equation without CoD.
The major limitation of those work lies in that the PDEs considered in those work must admit certain stochastic representation such as the Feymann-Kac formula and it seems difficult to generalize the proof techniques to broader classes of PDEs with no probabilistic interpretation. The work \cites{lu2021priori,lu2021priori2} analyzed a priori  generalization error of two-layer networks for solving elliptic PDEs and the Schr\"odinger eigenvalue problem on a bounded domain with Neumann boundary condition by assuming that the exact solutions lie in certain spectral Barron space, where the later was rigorously justified with a new regularity theory of the PDE solutions in the spectral Barron space. Similar generalization analysis was carried out in \cite{luo2020two} for second-order PDEs and in \cite{hong2021priori} for general even-order elliptic PDEs, but without justifying the Barron assumption on the solution. Compared to those work, our work focuses on  deriving complexity estimates of the solution in the integral-representation-based Barron space, which is more flexible and arguably more suitable for high-dimensional settings, see e.g., discussion in \cite{E19}. The work \cite{E20} established such estimates  in the Barron space  for certain specific PDEs that essentially admit explicit solution, whereas we aim to prove such estimates for general elliptic PDEs for which  the analytical ansatz is not available.   The work \cite{Marwah21} is closest to ours where the authors proved that the solution of the same type of elliptic PDE with a Dirichlet boundary condition can be approximated by a (deep) neural networks with at most $\Or(poly(d)N)$ parameters if the coefficients of the PDE are approximable by neural networks with at most $N$ parameters. While our overall approach based on iterative scheme borrows idea from \cite{Marwah21}, our result differs and improves theirs in many aspects: (1) Our result shows that the solution can be well approximated without CoD by a two-layer neural network with a single activation  whereas the result in \cite{Marwah21} requires a deep network which uses a mixure of at least two activation functions; (2) Our PDE is set up on the whole space rather than a compact domain, so our setting covers some important PDEs in physics, such as the stationary Schr\"odinger equation; (3) The result in  \cite{Marwah21} relies on another key assumption that the source term lies within the span of finitely many eigenfunctions of the elliptic operator whereas our result completely  removes such assumption. This is achieved by utilizing a novel preconditioning technique to uniformly control  the condition number of the iterative scheme that underpins the proof of our main theorem.

\paragraph{\textbf{Organization}} The rest of this paper will be organized as follows. In Section \ref{sec:setup} we  set up the PDE problem on the whole space and in Section \ref{sec:Barron} we  introduce  the definition of Barron functions and discuss their $H^1$-approximation by two-layer networks (see Theorem \ref{thm:approx}).  Our main theorems are stated in Section \ref{sec:mainthm}. We present the sketch proofs of the main theorems in Section \ref{sec:proof} and defer the complete proof to Appendix. The paper is concluded with discussions on some future directions.

\section{Problem setup and main results}
\label{sec:main-result}

\subsection{Problem description}\label{sec:setup}

\paragraph{\textbf{Notations}} Throughout this paper, we use $\norm{v}$ to denote the Euclidean norm of a vector $v\in \bR^d$. For a matrix $A\in \bR^{d\times d}$, we denote its operator norm by $\norm{A}=\sup_{v\in\bR^d\backslash\{0\}}\frac{\norm{A v}}{\norm{v}}$. For $R>0$, we denote by $\olB_R^d$ the closed ball in $\bR^d$ centered at $0$ with radius $R$, i.e., $\olB_R^d=\{x\in\bR^d:\norm{x}\leq R\}$.

Recall that we consider the $d$-dimensional second-order elliptic PDE \eqref{PDE}.
To guarantee the existence and  uniqueness  of the weak solution in $H^1(\bR^d)$, we make the following minimum assumptions on coefficients $A,c$ and right-hand side $f$; this assumption will be strengthened in our  main representation theorem. 
\begin{assumption}\label{asp:coeff-source}
$A(x)=(A_{ij}(x))_{1\leq i,j\leq d}$ is symmetric with $\norm{A(x)}\leq a_{\max}<\infty$ and uniformly elliptic, that is for some $a_{\min}>0$, it satisfies 
\begin{equation*}
    \xi^{\top} A(x)\xi\geq a_{\min}\norm{\xi}^2,\quad\forall\ x,\xi\in\bR^d.
\end{equation*}
We also assume that $0<c_{\min}\leq c(x)\leq c_{\max}<\infty$ and $f\in L^2(\bR^d)$.
\end{assumption}

Under Assumption~\ref{asp:coeff-source}, a standard argument using the Lax-Milgram theorem implies that there exists a unique weak solution $u^*\in H^1(\bR^d)$, such that $\calL u^* =f$ in $H^{-1}(\bR^d)$ which is the dual space of $H^1(\bR^d)$, i.e.,
\begin{equation*}
    \int_{\bR^d} A\nabla u^*\cdot \nabla v dx+\int_{\bR^d} cu^* vdx=\int_{\bR^d} fvdx,\quad\forall v\in H^1(\bR^d).
\end{equation*}
Our ultimate goal is to show that the solution can be approximated by a two-layer neural network on any bounded subset of $\bR^d$ with respect to the $H^1$ norm with a rate scaling at most polynomially in the dimension.  Notice that in general one cannot hope to obtain an approximation result on the whole space $\bR^d$ because the asymptotic behavior of a neural network function (determined by the activation) at infinity may mismatch that of the target function $u^\ast$. On the other hand, it is well-known that the convergence rate of neural networks for approximating functions in standard Sobolev or H\"older spaces still suffers from the CoD \cites{yarotsky2017error,yarotsky2018optimal}.  Therefore to obtain a rate without CoD for the neural networks approximation to the solution $u^\ast$, we need to argue that $u^\ast$ lies in a suitable smaller function space which has low complexity compared to Sobolev or H\"older spaces. We will work with the Barron space and show that  $u^\ast$ is arbitrarily close to a Barron function which can be approximated by a two-layer neural network without CoD.

\subsection{Barron spaces}
\label{sec:Barron}

The definition of Barron space is strongly motivated by the two-layer neural networks. Recall that a two-layer neural network with $k$ hidden neurons is a function of the form
\begin{equation}\label{eq:NN-2layer}
    u_k(x)=\frac{1}{k}\sum_{i=1}^k a_i\sigma(w_i^{\top} x+b_i),\quad x\in\bR^d.
\end{equation}
Here $\sigma:\bR\rightarrow\bR$ is some activation function and $(a_i,w_i,b_i)\in \bR\times\bR^d\times\bR,\ i=1,2,\dots,k$ are the network parameters. If the parameters are randomly chosen accordingly to some probability distribution, then in the infinite width limit the averaged sum in \eqref{eq:NN-2layer} formally converges  to the following probability integral
\begin{equation}\label{measure-represent}
    u_\rho(x):=\int a\sigma(w^{\top} x+b)\rho(da,dw,db),\quad x\in\bR^d,
\end{equation}
where $\rho$ is a probability measure on the parameter space  $\bR\times\bR^d\times\bR$. Observe that \eqref{eq:NN-2layer} is a special instance of \eqref{measure-represent} if we take $\rho(a,w,b)=\frac{1}{k}\sum_{i=1}^k \delta(a-a_i,w-w_i,b-b_i)$. 

The Barron norms and Barron spaces are then defined as follows, where we require the marginal measure in $w$ to have compact support. This is because that the (formal) first-order and second-order partial derivatives of $u_\rho(x)$ would involve with components of $w$ by chain rule. By adding some uniform bounds on $w$, we can to control the Barron norms after taking derivatives. In the subsequent discussion, we may also need to restrict our attention on functions defined on a bounded set. Therefore we present below the formal definition of a Barron function defined any domain $\Omega \subset \bR^d$.

\begin{definition}\label{def:barron}
Fix  $\Omega\subset\bR^d$ and $R\in[0,+\infty]$. For a function $g = u_\rho$ with some probability measure $\rho$, we define the Barron norm of $g$ on $\Omega$ with index $p\in [1,+\infty]$ and support radius $R$ by 
$$\begin{aligned}
    \norm{g}_{\calB^p_R(\Omega)} & =\inf_{\rho}\biggl\{\biggl(\int|a|^p\rho(da,dw,db)\biggr)^{\nicefrac{1}{p}}: g=\int a\sigma(w^{\top} x+b)\rho(da,dw,db) \text{ on }\Omega,\\
    & \qquad \qquad \rho\text{ is supported on }\bR\times \olB^d_R\times\bR\biggr\},
\end{aligned}$$
where $\olB_R^d=\{x\in\bR^d:\norm{x}\leq R\}$. The corresponding Barron space is then defined as
\begin{equation*}
    \calB^p_R(\Omega)=\left\{g:\norm{g}_{\calB^p_R(\Omega)}<\infty\right\}.
\end{equation*}
\end{definition}
It is worth making some comments on the definition above. Our definition of Barron space adapts a similar definition in \cite{E19} (see also \cite{Bach17}) with several important modifications for the purpose of PDE analysis. First we require that the $w$-marginal of the probability measure $\rho$ has compact support  in order to control the derivatives of a Barron function defined in \eqref{measure-represent}; in fact differentiating the integral of \eqref{measure-represent} leads to an integral of the product of the ridge function with $w$ (or its powers) and enforcing $\rho$ has a compact $w$-marginal thus controls the Barron norm of the derivatives of $u_\rho$. In addition, our definition of Barron norm only involves the $p$-th moment of $\rho$ with respect to $a$ parameter whereas the Barron norm in \cite{E19} takes the moments in all parameters into account. This is because \cite{E19} uses the unbounded ReLU activation function, which requires the moment condition in all parameters to make the integral in \eqref{def:barron} well-defined; whereas we will only consider bounded $\sigma$ (see Assumption \ref{asp:sigma}) and the integral is guaranteed to be finite under such assumption.

Both our notion of Barron space and the one in \cite{E19} are motivated by the seminal work of Barron \cite{Barron93} where he proved that if the Fourier transform $\mathcal{F}(f)$ of a function $f$ satisfies that $$
\int_{\bR^d} |\mathcal{F}(f)(\xi)| |\xi|d\xi <\infty,
$$
then there exists a two-layer network $u_k$ with $k$ hidden neurons  such that $\|f-u_k\|_{L^2(\Omega)}\leq Ck^{-\frac{1}{2}}$. Since Barron's original function class is defined via the Fourier transform, we call such function class the {\em spectral Barron space} to distinguish it from our Barron space based on the probability integral. We refer to  \cites{ klusowski2018approximation, bresler2020sharp, siegel2020approximation,siegel2020high, lu2021priori} for recent developments on the spectral Barron space.

 As we investigate the solution theory of the second-order PDE in the Barron space, we expect to differentiate the integral representation \eqref{measure-represent} up to the second order. Therefore, we assume that the activation function $\sigma$ as well as its first-order and second-order derivatives are all bounded in $\bR$.

\begin{assumption}\label{asp:sigma}
$\sigma:\bR\gt\bR$ is smooth with $C_0:=\sup_{y\in\bR}|\sigma(y)|<\infty$, $C_1:=\sup_{y\in\bR}|\sigma'(y)|<\infty$, and $\sup_{y\in\bR}|\sigma''(y)|<\infty$.
\end{assumption}
 
 Thanks to the H\"older inequality,  it is clear that $\calB^p_R(\Omega) \subset \calB^q_R(\Omega)$ when $p\leq q$. The following useful proposition (see also \cite{E19}*{Proposition 1}) shows that the reverse is also true and that the Barron norms and the Barron spaces are in fact independent of $p$. 
 
\begin{proposition}\label{prop:Barron-equal}
For any function $g\in \calB^1_R(\Omega)$, it holds that $\norm{g}_{\calB^\infty_R(\Omega)}=\norm{g}_{\calB^p_R(\Omega)}=\norm{g}_{\calB^1_R(\Omega)}$ for any $1\leq p\leq \infty$. As a consequence, $\calB^\infty_R(\Omega)=\calB^p_R(\Omega)=\calB^1_R(\Omega)$ for $1\leq p\leq \infty$.
\end{proposition}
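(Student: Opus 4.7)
The plan is to prove the chain of inequalities
$$\norm{g}_{\calB^1_R(\Omega)} \leq \norm{g}_{\calB^p_R(\Omega)} \leq \norm{g}_{\calB^\infty_R(\Omega)} \leq \norm{g}_{\calB^1_R(\Omega)},$$
after which all three quantities coincide. The first two inequalities are immediate from H\"older's inequality applied to the probability measure $\rho$: for any representing $\rho$ supported on $\bR\times\olB_R^d\times\bR$, we have $\int |a|\,\rho \leq (\int |a|^p\,\rho)^{1/p} \leq \mathrm{ess\,sup}_\rho |a|$, and taking infima over $\rho$ gives the two inequalities.

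The substantive content is thus the reverse inequality $\norm{g}_{\calB^\infty_R(\Omega)} \leq \norm{g}_{\calB^1_R(\Omega)}$. The plan is to build, out of any near-optimal representation in $\calB^1_R(\Omega)$, a new probability measure $\wt\rho$ that still represents $g$ but whose $a$-marginal is supported on just two points. Concretely, fix $\epsilon>0$ and choose $\rho$ supported on $\bR\times\olB_R^d\times\bR$ with
$$g(x) = \int a\,\sigma(w^\top x+b)\,\rho(da,dw,db)\quad\text{on }\Omega\quad\text{and}\quad M := \int |a|\,\rho(da,dw,db) \leq \norm{g}_{\calB^1_R(\Omega)} + \epsilon.$$
If $M=0$ then $g\equiv 0$ on $\Omega$ and the conclusion is trivial, so assume $M>0$. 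Define the map $T(a,w,b) = (M\,\mathrm{sgn}(a),w,b)$ and set $\wt\rho$ to be the pushforward under $T$ of the finite measure $\tfrac{|a|}{M}\rho(da,dw,db)$.

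I would then verify three properties in order. First, $\wt\rho$ is a probability measure on $\bR\times\olB_R^d\times\bR$ since $\int \tfrac{|a|}{M}\,\rho = 1$ and $T$ preserves the $w$-coordinate. Second, $\wt\rho$ is supported on $\{-M,M\}\times\olB_R^d\times\bR$, so $\mathrm{ess\,sup}_{\wt\rho}|a| \leq M$. Third, using the change-of-variables formula for pushforwards,
$$\int \wt a\,\sigma(w^\top x+b)\,\wt\rho(d\wt a,dw,db) = \int M\,\mathrm{sgn}(a)\,\sigma(w^\top x+b)\,\tfrac{|a|}{M}\,\rho(da,dw,db) = g(x),$$
so $\wt\rho$ is an admissible representation of $g$. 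Combining these, $\norm{g}_{\calB^\infty_R(\Omega)} \leq M \leq \norm{g}_{\calB^1_R(\Omega)} + \epsilon$, and sending $\epsilon\to 0$ closes the loop.

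There is no genuine obstacle — the argument is a standard renormalization/pushforward trick. The only mild points that require care are (i) handling the set $\{a=0\}$, which is automatic since the weight $|a|/M$ annihilates it, (ii) checking that $\wt\rho$ inherits compact $w$-support from $\rho$, which is built into the definition of $T$, and (iii) threading the infima through the $\epsilon$ slack. The ``consequence'' statement $\calB^\infty_R(\Omega)=\calB^p_R(\Omega)=\calB^1_R(\Omega)$ then follows immediately, since the three norms being equal (and finite for one iff finite for all) forces the underlying function classes to coincide.
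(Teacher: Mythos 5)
Your proof is correct and follows essentially the same route as the paper: both reduce to showing $\norm{g}_{\calB^\infty_R(\Omega)}\leq\norm{g}_{\calB^1_R(\Omega)}$ by reweighting a near-optimal $\rho$ by $|a|/M$ and mapping the $a$-coordinate to $\pm M$, which is exactly the two-point measure the paper builds by hand. Your pushforward phrasing is a clean equivalent reformulation, and you go slightly beyond the paper in explicitly handling the degenerate case $M=0$.
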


The proof of Proposition~\ref{prop:Barron-equal} can be found in Appendix~\ref{sec:pf-Barron}. 

The most important property that makes Barron functions distinct from Sobolev or H\"older functions is  that they can be approximated by two-layer neural networks with a dimension-independent approximation rate in $H^1$ norm as shown in Theorem \ref{thm:approx}. 
\begin{theorem}[Approximation theorem in $H^1$ norm] \label{thm:approx}
Suppose that Assumption~\ref{asp:sigma} holds and that $g\in\calB^1_R(\Omega)$. Then for any open bounded subset $\Omega_0\subset \Omega$ and any $k\in\bNp$, there exists $\{(a_i,w_i,b_i)\}_{i=1}^k$ satisfying 
\begin{equation}\label{eq:approx-H1}
    \norm{\frac{1}{k}\sum_{i=1}^k a_i\sigma(w_i^{\top} x+b_i)-g(x)}_{H^1(\Omega_0)}^2\leq \frac{2(C_0^2+R^2 C_1^2) m(\Omega_0)\norm{g}^2_{\calB^1_R(\Omega)}}{k},
\end{equation}
where $C_0$ and $C_1$ are the constants in Assumption~\ref{asp:sigma}, and $m(\Omega_0)$ is the Lebesgue measure of $\Omega_0$.
\end{theorem}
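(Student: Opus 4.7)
The plan is to invoke the classical Maurey--Barron probabilistic argument adapted from $L^2$ to the $H^1$ norm. Given $g\in\calB^1_R(\Omega)$, fix $\varepsilon>0$ and, using the definition of the infimum, choose a probability measure $\rho$ on $\bR\times\olB_R^d\times\bR$ that represents $g$ on $\Omega$ and satisfies $B:=\int|a|\,\rho(da,dw,db)\leq(1+\varepsilon)\norm{g}_{\calB^1_R(\Omega)}$ (the degenerate case $B=0$ is trivial). Reparametrize by introducing the probability measure $\tilde\rho(da,dw,db):=|a|\rho(da,dw,db)/B$ and the integrand $\phi(a,w,b,x):=B\,\mathrm{sgn}(a)\sigma(w^\top x+b)$, so that $g(x)=\int\phi(a,w,b,x)\,\tilde\rho(da,dw,db)$ on $\Omega$. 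A single draw $(A,W,B')\sim\tilde\rho$ yields a random function $\psi(x)=\phi(A,W,B',x)$ satisfying the uniform pointwise bounds $|\psi(x)|\leq BC_0$ and, after differentiating in $x$ using the compact $w$-support together with Assumption~\ref{asp:sigma}, $|\nabla\psi(x)|\leq BC_1\|W\|\leq BC_1R$.

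Next I would draw iid samples $(a_i,w_i,b_i)\sim\tilde\rho$, set $\psi_i(x)=\phi(a_i,w_i,b_i,x)$, and estimate the expected squared $H^1$ error of the empirical mean. Because $\sigma$ and $\sigma'$ are bounded and $\|w\|\leq R$, dominated convergence lets one interchange integration in $\tilde\rho$ with differentiation in $x$, yielding both $g\in H^1(\Omega_0)$ and $\mathbb{E}[\psi_i]=g$ as elements of $H^1(\Omega_0)$. Since $H^1(\Omega_0)$ is a Hilbert space and the $\psi_i$ are iid, independence annihilates the cross terms and gives the standard variance identity
$$
\mathbb{E}\Bigl\|\tfrac{1}{k}\sum_{i=1}^{k}\psi_i-g\Bigr\|_{H^1(\Omega_0)}^{2}=\tfrac{1}{k}\bigl(\mathbb{E}\|\psi_1\|_{H^1(\Omega_0)}^{2}-\|g\|_{H^1(\Omega_0)}^{2}\bigr)\leq\tfrac{B^{2}(C_0^{2}+C_1^{2}R^{2})\,m(\Omega_0)}{k},
$$
where the final inequality comes from integrating the pointwise bounds on $\psi_1$ over $\Omega_0$.

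By the probabilistic existence principle, some deterministic realization $\{(a_i,w_i,b_i)\}_{i=1}^{k}$ attains at most this expected value. Choosing $\varepsilon$ small enough that $(1+\varepsilon)^2\leq 2$ upgrades $B^2$ to $2\norm{g}_{\calB^1_R(\Omega)}^{2}$ and yields exactly \eqref{eq:approx-H1}, with the network weights $(B\,\mathrm{sgn}(a_i),w_i,b_i)$ of the form prescribed in the statement. The main technical point — and the reason the definition of $\calB^1_R(\Omega)$ insists on compact $w$-support — is justifying the interchange of differentiation with the probability integral in $H^1(\Omega_0)$; the bound $\|w\|\leq R$ combined with $\sup|\sigma'|<\infty$ supplies the required dominating function, after which the remainder is merely the classical Maurey--Barron sampling argument transported from $L^{2}$ to the Hilbert space $H^1(\Omega_0)$.
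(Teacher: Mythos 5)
Your proof is correct and follows essentially the same route as the paper: both are the Maurey--Barron Monte-Carlo sampling argument transported to $H^1$, with the compact $w$-support of the representing measure supplying the dominating function that controls the gradient term. The only cosmetic difference is that you perform the $|a|$-reweighting (passing to $\tilde\rho$ with sign-only coefficients $B\,\mathrm{sgn}(a)$) inline, whereas the paper packages that same construction inside Proposition~\ref{prop:Barron-equal} and then simply draws from a near-optimal $\calB^2$ measure; the variance identity, the pointwise bounds $|\psi|\le BC_0$, $|\nabla\psi|\le BC_1R$, and the probabilistic existence step are identical in both.
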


Theorem~\ref{thm:approx} provides an $H^1$-approximation rate for Barron functions defined by the integral representation \eqref{measure-represent}. The proof is deferred to  Appendix~\ref{sec:pf-Barron}.  Similar approximation results in the sense of $L^2$ for  Barron functions (including  formulations based on spectrum and integral representation) have been proved in \cites{Barron93, klusowski2018approximation, bresler2020sharp, siegel2020approximation,E19}. $H^1$-approximation  results for spectral Barron functions were previously obtained in \cite{siegel2020high} and \cite{lu2021priori}.

\subsection{Main theorems}\label{sec:mainthm}

To state our main theorems, we need to make some additional complexity assumption on the coefficients $A, c$ and the source term $f$ of the PDE \eqref{PDE}, which is reasonable as otherwise there is no hope that the solution would lie in a smaller function class. 

\begin{assumption}\label{asp:coeff-source-Barron}
For some $R_A, R_c,R_f\in(0,+\infty)$, we have $\ell_A:=\max_{1\leq i,j\leq d}\norm{A_{ij}}_{\calB^1_{R_A}(\bR^d)}<\infty$, $\ell_c:=\norm{c}_{\calB^1_{R_c}(\bR^d)}<\infty$, and $\ell_f:=\norm{f}_{\calB^1_{R_f}(\bR^d)}<\infty$.
\end{assumption}

We remark that Assumption \ref{asp:coeff-source-Barron} is compatible with our earlier  Assumption~\ref{asp:coeff-source} on the coefficients $A,c$ and the source $f$. In fact, it is easy to see that constant coefficients $A,c$  satisfy both assumptions if $\text{im}(\sigma)\neq \{0\}$, i.e., $\sigma$ is not constantly zero. As for $f$, we provide  in  Proposition~\ref{prop:f-example} of  Appendix \ref{app:asmp} a concrete class of $f$ that satisfies both assumptions.

We also need  two additional technical assumptions on the activation function.

\begin{assumption}\label{asp:multi}
The function $h:\bR^2\gt\bR,\ (y_1,y_2)\mapsto \sigma(y_1)\sigma(y_2)$ satisfies that $\ell_m:=\norm{h}_{\calB^1_{R_m}(\bR^2)}<\infty$, for some $R_m\in(0,+\infty)$.
\end{assumption}

\begin{assumption}\label{asp:deri}
It holds that $\ell_{d,1}:=\norm{\sigma'}_{\calB^1_{R_{d,1}}(\bR)}<\infty$ and $\ell_{d,2}:=\norm{\sigma''}_{\calB^1_{R_{d,2}}(\bR)}<\infty$, for some $R_{d,1},R_{d,2}\in(0,+\infty)$.
\end{assumption}

Assumption~\ref{asp:multi} and Assumption~\ref{asp:deri} guarantee that Barron spaces are closed under multiplication and differentiations (up to the second order) respectively; see Lemma \ref{lem:alg-Barron} (iii)-(iv) for a precise statement. These operations and the associated closeness will be useful for constructing approximation to the exact solution $u^\ast$ of the PDE \eqref{PDE} in Barron spaces. Proposition~\ref{prop:sigma-example} shows that Assumption~\ref{asp:multi} and Assumption~\ref{asp:deri} hold for a relatively large class of activation functions including cosine. 

With the preparations above, we are ready to state our main theorems below. The first main  theorem  concerns the complexity estimate of the exact solution $u^*$ in the Barron space.

\begin{theorem}\label{thm:main}
Suppose that Assumption~\ref{asp:coeff-source}, \ref{asp:sigma}, \ref{asp:coeff-source-Barron}, \ref{asp:multi}, and \ref{asp:deri} hold. For any $\eps\in(0,1/2)$, there exists $u\in\calB^1_R(\bR^d)$ with $R\leq \gamma_1 \left(\frac{1}{\eps}\right)^{\gamma_2}$ and $\norm{u}_{\calB^1_R(\bR^d)}\leq \beta_1 \left(\frac{d}{\eps}\right)^{\beta_2 \lvert\ln\eps\rvert}$, such that $\norm{u-u^*}_{H^1(\bR^d)}\leq \epsilon$. Here $\gamma_1$, $\gamma_2$, $\beta_1$, and $\beta_2$ only depend on $\norm{f}_{H^{-1}(\bR^d)}$ and constants in Assumptions~\ref{asp:coeff-source}, \ref{asp:coeff-source-Barron}, \ref{asp:multi}, and \ref{asp:deri}.

Furthermore, if $\sigma=\cos$, then $\norm{u-u^*}_{H^1(\bR^d)}\leq \epsilon$ can be achieved with $R\leq \gamma_1'\lvert\ln\eps\rvert$ and $\norm{u}_{\calB^1_R(\bR^d)}\leq \beta_1' d^{\beta_2'\lvert\ln\eps\rvert}$, where $\gamma_1'$, $\beta_1'$, and $\beta_2'$ only depend on $\norm{f}_{H^{-1}(\bR^d)}$ and constants in Assumption~\ref{asp:coeff-source} and \ref{asp:coeff-source-Barron}.
\end{theorem}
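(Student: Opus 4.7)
The natural strategy, building on the iterative approach of \cite{Marwah21}, is to construct a sequence of Barron approximants $\{u_n\}$ converging to $u^*$ at a geometric rate in $H^1$, while ensuring that the Barron norm and support radius of $u_n$ grow only by a bounded factor per iteration. Concretely I would set up a (possibly preconditioned) Richardson-type iteration
\[
u_{n+1}=u_n-\tau\,P^{-1}(\calL u_n - f),\qquad u_0\equiv 0,
\]
where the preconditioner $P^{-1}$ is a simple operator (e.g.\ multiplication by $1/c$, or a constant-coefficient resolvent whose action on the chosen activation can be computed in closed form) chosen so that $I-\tau P^{-1}\calL$ is a contraction on $H^1(\bR^d)$.

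The first task is to establish that contraction: using the coercivity and boundedness of the bilinear form $a(u,v)=\int A\nabla u\cdot\nabla v+cuv$ on $H^1$, together with the effect of $P^{-1}$, one shows $\|u_{n+1}-u^*\|_{H^1}\le \rho\,\|u_n-u^*\|_{H^1}$ for some $\rho\in(0,1)$ depending only on the coefficient bounds $a_{\min},a_{\max},c_{\min},c_{\max}$, and not on $f$ or $d$. Iterating from $u_0=0$ and using $\|u^*\|_{H^1}\le C\|f\|_{H^{-1}}$ yields $\|u_n-u^*\|_{H^1}\le\rho^n\|f\|_{H^{-1}}$, so $n=\lceil C|\ln\epsilon|\rceil$ steps suffice for the required accuracy.

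The second task is to propagate Barron regularity along the iteration. Since $\calL u_n=-\nabla\cdot(A\nabla u_n)+cu_n$ is assembled from multiplications by the Barron coefficients $A_{ij}$ and $c$, from adding the Barron source $f$, and from first- and second-order derivatives, the algebraic closure of $\calB^1_R$ under products and differentiations (which is precisely what Assumptions~\ref{asp:multi} and~\ref{asp:deri} are designed to supply) gives a recursion of the form
\[
\|u_{n+1}\|_{\calB^1_{R_{n+1}}}\le M\bigl(\|u_n\|_{\calB^1_{R_n}}+\|f\|_{\calB^1_{R_f}}\bigr),\qquad R_{n+1}\le K(R_n+R_A+R_c+R_f)
\]
with constants $M,K$ depending polynomially on $d$ and on the Assumption data. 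Unrolling across $n\asymp|\ln\epsilon|$ iterations gives the stated $\beta_1(d/\epsilon)^{\beta_2|\ln\epsilon|}$ Barron-norm bound and a polynomial support radius. In the cosine case the identity $\cos y_1\cos y_2=\tfrac12(\cos(y_1+y_2)+\cos(y_1-y_2))$ shows that multiplication in $\calB^1_R$ grows the support radius additively rather than multiplicatively, and differentiation preserves cosines with no radius blow-up; this improves the recursion to $R_{n+1}\le R_n+C$ and yields the logarithmic bound $R\le\gamma_1'|\ln\epsilon|$, while also removing a factor of $|\ln\epsilon|$ from the norm exponent.

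I expect the principal difficulty to lie in the contraction step. A na\"ive iteration $u_{n+1}=u_n-\tau(\calL u_n-f)$ does not contract on $H^1$ since $\calL$ maps $H^1$ into $H^{-1}$, and any scalar $\tau$ chosen to damp high-frequency components of $\calL$ would make the effective contraction rate depend on the support radius $R_n$ and therefore degrade across iterations; this is essentially the spectral restriction that \cite{Marwah21} places on $f$. Overcoming it is precisely the role of the preconditioning technique alluded to in the introduction: one must find a $P$ that (i) makes $P^{-1}\calL$ well-conditioned on $H^1$ uniformly in $n$, and (ii) is compatible with the Barron calculus in the sense that $P^{-1}$ maps $\calB^1_R$ into $\calB^1_{R'}$ with both $R'$ and the resulting norm controlled by $R$ and $\|g\|_{\calB^1_R}$. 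Once such a $P$ is identified, the two ingredients above combine cleanly to yield the stated complexity estimate.
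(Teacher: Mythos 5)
Your proposal follows the same strategy as the paper: a preconditioned Richardson iteration $u_{t+1}=u_t-\alpha (I-\Delta)^{-1}(\calL u_t - f)$, a contraction estimate on $H^1$ with rate depending only on $a_{\min},a_{\max},c_{\min},c_{\max}$, propagation of Barron regularity via the algebraic closure of $\calB^1_R$ under products and derivatives, and the improved additive radius growth for cosine. Two small corrections worth noting. First, of your two candidate preconditioners, multiplication by $1/c$ does not work: $\frac{1}{c}\calL$ is still a second-order operator and does not map $H^1$ boundedly to $H^1$, so $I-\tau\frac{1}{c}\calL$ cannot be an $H^1$-contraction; the correct choice is the constant-coefficient resolvent $(I-\Delta)^{-1}$, for which one verifies that $(I-\Delta)^{-1/2}\calL(I-\Delta)^{-1/2}$ is a bounded self-adjoint operator on $L^2$ with spectrum in $[\lambda_{\min},\lambda_{\max}]$. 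Second, for a general activation the amplification factor $M$ in your Barron-norm recursion is not independent of $R_n$: taking a first (second) derivative multiplies the Barron norm by a factor proportional to $R_n$ ($R_n^2$), and since $R_n$ grows geometrically, the product over $n\asymp|\ln\eps|$ iterations contributes the extra $(1/\eps)^{\beta_2|\ln\eps|}$ factor; with $M$ truly constant in $R_n$ you would get the stronger $d^{O(|\ln\eps|)}$ bound that the paper obtains only in the cosine case. Your final stated bounds are the correct ones, so this is a matter of making the recursion explicit rather than a gap in the argument.
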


Theorem~\ref{thm:main} shows that the exact solution $u^*$ is $\eps$-close (in the sense of $H^1$) to a Barron function $u\in \calB^1_R(\bR^d)$. In addition, the Barron norm of $u$ grows at most polynomially in $d$, indicating that the complexity of $u$ dose not suffer from the CoD. Also the complexity estimate gets substantially improved when the activation function is cosine. In fact, advantages of periodic activation functions have been empirically observed in some earlier works, see e.g., \cite{sitzmann2020implicit}. It remains an open question whether results similar to Theorem~\ref{thm:main} can be established for activation functions that do not satisfy Assumption~\ref{asp:multi} and Assumption~\ref{asp:deri}.
 This will be investigated in future works.

Thanks to Theorem~\ref{thm:approx} and Theorem~\ref{thm:main}, it is easy to conclude that the PDE solution $u^*$ can be approximated on any  bounded subset $\Omega\subset\bR^d$ using two-layer neural networks with the number of hidden neurons $k$ scaling at most polynomially in $d$. 

\begin{theorem}\label{thm:main2}
Under the same assumptions as in Theorem~\ref{thm:main}, given any $\eps\in(0,1/2)$ and any open bounded subset $\Omega\subset\bR^d$, there exists a two-layer neural network $u_k(x)$ with $k\leq \gamma m(\Omega) \left(\frac{d}{\eps}\right)^{\beta\lvert\ln\eps\rvert}$ such that $\norm{u_k-u^*}_{H^1(\Omega)}\leq\epsilon$, where $\gamma$ and $\beta$ only depend on $\norm{f}_{H^{-1}(\bR^d)}$ and constants in Assumptions~\ref{asp:coeff-source}, \ref{asp:sigma}, \ref{asp:coeff-source-Barron}, \ref{asp:multi}, and \ref{asp:deri}.

Furthermore, if $\sigma=\cos$, then $\norm{u_k-u^*}_{H^1(\Omega)}\leq \epsilon$ can be achieved with $k\leq\gamma' m(\Omega) d^{\beta' \lvert\ln\eps\rvert}$, where $\gamma'$ and $\beta'$ only depend on $\norm{f}_{H^{-1}(\bR^d)}$ and constants in Assumptions~\ref{asp:coeff-source}, \ref{asp:sigma}, and \ref{asp:coeff-source-Barron}.
\end{theorem}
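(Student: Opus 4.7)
\medskip

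The plan is to obtain Theorem~\ref{thm:main2} as a direct corollary of Theorem~\ref{thm:main} and the Barron approximation Theorem~\ref{thm:approx}, using nothing more than a triangle inequality and careful bookkeeping of the exponents.

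First I would apply Theorem~\ref{thm:main} with accuracy $\eps/2$ to produce a Barron function $u \in \calB^1_R(\bR^d)$ satisfying $\norm{u-u^*}_{H^1(\bR^d)} \le \eps/2$, where
\[
R \;\le\; \gamma_1(2/\eps)^{\gamma_2}, \qquad \norm{u}_{\calB^1_R(\bR^d)} \;\le\; \beta_1(2d/\eps)^{\beta_2|\ln(\eps/2)|}.
\]
Next I would apply Theorem~\ref{thm:approx} to this $u$ on the open bounded set $\Omega\subset\bR^d$: for any $k\in\bNp$ there exist parameters $\{(a_i,w_i,b_i)\}_{i=1}^k$ so that the resulting two-layer network $u_k$ obeys
\[
\norm{u_k-u}_{H^1(\Omega)}^{2} \;\le\; \frac{2(C_0^2 + R^2 C_1^2)\, m(\Omega)\, \norm{u}_{\calB^1_R(\bR^d)}^{2}}{k}.
\]
Choosing $k$ so that the right-hand side does not exceed $(\eps/2)^2$ and invoking $\norm{u-u^*}_{H^1(\Omega)}\le\norm{u-u^*}_{H^1(\bR^d)}$, the triangle inequality yields $\norm{u_k-u^*}_{H^1(\Omega)}\le\eps$.

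What remains is to check that the sufficient $k$ can be written in the form stated in the theorem. Plugging in the above bounds,
\[
k \;\lesssim\; \frac{(C_0^2 + R^2 C_1^2)\, m(\Omega)\, \norm{u}_{\calB^1_R(\bR^d)}^{2}}{\eps^{2}} \;\lesssim\; m(\Omega)\cdot\frac{1}{\eps^{2}}\cdot\Bigl(\gamma_1(2/\eps)^{\gamma_2}\Bigr)^{2}\cdot\beta_1^{2}\Bigl(2d/\eps\Bigr)^{2\beta_2|\ln(\eps/2)|},
\]
and the key bookkeeping step is to absorb the purely polynomial-in-$1/\eps$ factors $\eps^{-2}$ and $\eps^{-2\gamma_2}$ into the super-polynomial factor $(d/\eps)^{2\beta_2|\ln(\eps/2)|}$. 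This is legitimate because any fixed power $(1/\eps)^{q}$ with $q>0$ satisfies $(1/\eps)^{q}\le(1/\eps)^{|\ln\eps|}$ once $|\ln\eps|\ge q$, and for the finitely many remaining $\eps\in(0,1/2)$ one simply enlarges the prefactor. Adjusting $\beta_1,\beta_2$ into new constants $\gamma,\beta$ (depending on the quantities listed in the theorem) gives the stated bound $k\le \gamma\, m(\Omega)\,(d/\eps)^{\beta|\ln\eps|}$.

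For the cosine case I would proceed identically, starting instead from the sharper bounds $R\le\gamma_1'|\ln(\eps/2)|$ and $\norm{u}_{\calB^1_R(\bR^d)}\le\beta_1' d^{\beta_2'|\ln(\eps/2)|}$ furnished by the second half of Theorem~\ref{thm:main}. The sufficient width now becomes
\[
k \;\lesssim\; m(\Omega)\cdot\frac{1+|\ln\eps|^{2}}{\eps^{2}}\cdot d^{2\beta_2'|\ln(\eps/2)|},
\]
and the same absorption argument collapses the $\eps^{-2}$ and $|\ln\eps|^{2}$ factors into an enlargement of $\beta_2'$, yielding $k\le\gamma' m(\Omega)\, d^{\beta'|\ln\eps|}$ as claimed. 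There is no genuine mathematical obstacle here; the only non-automatic step is the exponent bookkeeping in the last paragraph, which requires being explicit about how polynomial-in-$1/\eps$ terms merge with the dominant $|\ln\eps|$-many-fold growth without altering the polynomial dependence on $d$.
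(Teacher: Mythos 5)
Your proposal is correct and follows essentially the same two-step route the paper itself takes: apply Theorem~\ref{thm:main} with tolerance $\eps/2$, apply Theorem~\ref{thm:approx} with tolerance $\eps/2$, and combine by the triangle inequality. The paper states this in a single sentence and leaves the exponent bookkeeping implicit; you spell it out correctly, apart from one harmless slip (the set of $\eps\in(0,1/2)$ with $|\ln\eps|<q$ is a bounded subinterval, not finitely many points, but the conclusion that the offending factor is uniformly bounded there and can be absorbed into the prefactor is unaffected).
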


\section{Proofs of the main results}
\label{sec:proof}

We sketch the proof ideas in this section and present the full details in the Appendix. 
\subsection{Preconditioned functional iterative scheme}
\label{sec:preconditioning}

The key ingredient of our proof of Theorem \ref{thm:main} is a functional iterative scheme for solving the elliptic PDE, which can be viewed as an infinite dimensional analog of the preconditioned steepest descent algorithm to solve linear algebra equations. Recall when solving the linear equation $A x = b$ with  $A\in \bR^{n\times n}$ and $x, b\in \bR^n$, the preconditioned steepest descent algorithm \cite{GolubVanLoan}  runs the iteration
\begin{equation*}
    x_{t+1} = x_t - \alpha P ( A x_t - b), 
\end{equation*}
where $P$ is a preconditioning matrix, $\alpha$ is the step size, and $t = 0, 1, 2, \cdots$ indicates the iteration index. The purpose of the preconditioned iteration is to reduce the condition number of the iteration $\kappa(P A)$ by choosing a suitable $P$ and hence accelerate the convergence of the iterative algorithm. 

In the case of solving the elliptic PDE \eqref{PDE}, we generalize the preconditioned steepest descent iteration to the functional setting by considering the following iteration scheme in $H^1(\bR^d)$:
\begin{equation}\label{eq:update-rule}
    u_{t+1} = u_t -\alpha (I-\Delta)^{-1}(\calL u_t-f),
\end{equation}
where the inverse operator $(I - \Delta)^{-1}$ plays the role of preconditioner. As a matter of fact,  we will show that the condition number of $(I - \Delta)^{-1} \mc{L}$ is bounded and this directly implies that the iterative scheme \eqref{eq:update-rule} converges  exponentially to the exact solution $u^\ast$. Indeed, we have the following contraction estimate for the iteration \eqref{eq:update-rule}, whose proof can be found in Appendix~\ref{sec:pf-preconditioning}.
\begin{proposition}\label{prop:contract}
Recall the constants $a_{\min}, a_{\max}, c_{\min}, c_{\max}$ defined in Assumption~\ref{asp:coeff-source}. For any $\alpha > 0$ and any $u\in H^1(\bR^d)$,
\begin{equation}\label{eq:contract}
    \norm{(I-\alpha (I-\Delta)^{-1}\calL)u}_{H^1(\bR^d)} \leq \Lambda(\alpha)  \|u\|_{H^1(\bR^d)},
\end{equation}
where the contraction factor $\Lambda(\alpha) =\sup_{\lambda \in [\lambda_{\min} , \lambda_{\max}]} |1 - \alpha \lambda|$ with $\lambda_{\min}= \min\{a_{\min}, c_{\min}\}$ and $\lambda_{\max}= \max\{a_{\max}, c_{\max}\}$. 
\end{proposition}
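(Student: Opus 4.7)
The plan is to recognize $T := (I-\Delta)^{-1}\calL$ as a bounded self-adjoint operator on the Hilbert space $(H^1(\bR^d),\langle\cdot,\cdot\rangle_{H^1})$ whose spectrum lies in $[\lambda_{\min},\lambda_{\max}]$, and then to read off the operator norm of $I-\alpha T$ via functional calculus, which immediately gives \eqref{eq:contract}.

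The first step is to establish the key identity
\begin{equation*}
\langle Tu,v\rangle_{H^1(\bR^d)} \;=\; a(u,v) \;:=\; \int_{\bR^d} A\nabla u\cdot\nabla v\ud x + \int_{\bR^d} c\, uv \ud x
\end{equation*}
for all $u,v\in H^1(\bR^d)$. The point is that $(I-\Delta)^{-1}\colon H^{-1}(\bR^d)\to H^1(\bR^d)$ is precisely the Riesz isomorphism associated with $\langle\cdot,\cdot\rangle_{H^1}$: by Lax--Milgram applied to the bilinear form $\langle\cdot,\cdot\rangle_{H^1}$ on $H^1$, if $w=(I-\Delta)^{-1}F$ for some $F\in H^{-1}$, then $\langle w,v\rangle_{H^1}=\langle F,v\rangle_{H^{-1},H^1}$ for every $v\in H^1$. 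Applying this with $F=\calL u$ and unwinding the weak definition of $\calL u$ yields the identity. Since $A$ is symmetric, $a(\cdot,\cdot)$ is a symmetric bilinear form, so $T$ is self-adjoint with respect to $\langle\cdot,\cdot\rangle_{H^1}$.

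The second step uses Assumption~\ref{asp:coeff-source} to sandwich the form $a$: from $a_{\min}\|\xi\|^2\leq \xi^\top A\xi\leq a_{\max}\|\xi\|^2$ and $c_{\min}\leq c\leq c_{\max}$ we obtain
\begin{equation*}
\lambda_{\min}\,\|u\|_{H^1}^2 \;\leq\; a(u,u) \;\leq\; \lambda_{\max}\,\|u\|_{H^1}^2
\end{equation*}
for all $u\in H^1(\bR^d)$. Combined with the identity from the first step, this means $\lambda_{\min} I\leq T\leq \lambda_{\max} I$ in the sense of quadratic forms on $H^1$, so the spectrum of the self-adjoint operator $T$ is contained in $[\lambda_{\min},\lambda_{\max}]$.

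The final step is just bounded self-adjoint functional calculus: $I-\alpha T$ is self-adjoint with spectrum in $[1-\alpha\lambda_{\max},\,1-\alpha\lambda_{\min}]$, hence its operator norm equals $\sup_{\lambda\in[\lambda_{\min},\lambda_{\max}]}|1-\alpha\lambda|=\Lambda(\alpha)$, which yields \eqref{eq:contract}. No serious obstacle is anticipated; the only point requiring a bit of care is the Riesz-isomorphism identity for $H^{-1}$-valued data on the unbounded domain $\bR^d$, but this is standard once one takes the weak formulation of $(I-\Delta)^{-1}$ as a definition rather than inverting the differential operator pointwise.
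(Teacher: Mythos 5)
Your proposal is correct, and it is the same argument in substance as the paper's, just phrased intrinsically on $H^1$ rather than via conjugation to $L^2$. The paper introduces the fractional power $P^{1/2}=(I-\Delta)^{1/2}$, uses the unitary equivalence $\|P^{1/2}u\|_{L^2}=\|u\|_{H^1}$ to move the operator to $L^2$, proves boundedness and self-adjointness of $P^{-1/2}\calL P^{-1/2}$ there (Lemma~\ref{lem:bdd-selfadjoint}), and then invokes the ``spectral radius $\le$ numerical radius'' bound (Lemma~\ref{lem:radius}) together with the quadratic-form sandwich to conclude. You instead observe directly that $(I-\Delta)^{-1}:H^{-1}\to H^1$ is the Riesz isomorphism for the $H^1$ inner product, so $T=(I-\Delta)^{-1}\calL$ satisfies $\langle Tu,v\rangle_{H^1}=a(u,v)$; self-adjointness on $H^1$ and the spectral localization in $[\lambda_{\min},\lambda_{\max}]$ then follow immediately from symmetry and coercivity/boundedness of $a$, and the operator norm of $I-\alpha T$ is read off from the spectrum. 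These are equivalent — your Riesz identity is precisely the paper's relation $\langle P^{1/2}u,P^{1/2}v\rangle_{L^2}=\langle u,v\rangle_{H^1}$ in disguise — but your version avoids introducing $P^{1/2}$ and the numerical-radius lemma, giving a slightly leaner argument. The one implicit step you should make explicit if writing this up is that $T$ is bounded on $H^1$ (it follows from $|\langle Tu,v\rangle_{H^1}|=|a(u,v)|\le\lambda_{\max}\|u\|_{H^1}\|v\|_{H^1}$, so $\|T\|\le\lambda_{\max}$), so that the spectral theorem for bounded self-adjoint operators applies.
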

In particular, minimizing $\Lambda(\alpha)$ with respect to the step size $\alpha$ yields an optimal choice of  step size 
$$
\alpha_\ast :=\frac{2}{\lambda_{\min} + \lambda_{\max}}.
$$
With $\alpha = \alpha_\ast$ in \eqref{eq:contract}, we obtain that 
\begin{equation}\label{eq:contract2}
    \biggl\lVert \Bigl(I-\frac{2}{\lambda_{\min} + \lambda_{\max}} (I-\Delta)^{-1}\calL\Bigr)u\biggr\rVert_{H^1(\bR^d)}\leq  \frac{\lambda_{\max}-\lambda_{\min}}{\lambda_{\max}+\lambda_{\min}}\|u\|_{H^1(\bR^d)}.
\end{equation}
As a direct consequence, we obtain the following estimate for the number of iterations required to achieve a given error tolerance. 
\begin{corollary}\label{cor:iterate-number}
Let $u^*$ be the exact solution of the PDE \eqref{PDE}. Under Assumption~\ref{asp:coeff-source}, consider the iteration scheme \eqref{eq:update-rule} with $\alpha = \alpha_\ast =\frac{2}{\lambda_{\min} + \lambda_{\max}}$. Then for any 
\begin{equation*}\label{eq:iterate-number}
    T\geq \left(\ln{\frac{\lambda_{\max}+\lambda_{\min}}{\lambda_{\max}-\lambda_{\min}}}\right)^{-1}\ln{\frac{\norm{u_0-u^*}_{H^1(\bR^d)}}{\epsilon}},
\end{equation*}
the iterate $u_T$ satisfies $\norm{u_T-u^*}_{H^1(\bR^n)}\leq \epsilon$.
\end{corollary}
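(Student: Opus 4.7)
The plan is to derive the corollary directly from the contraction estimate in Proposition~\ref{prop:contract} by viewing the iteration \eqref{eq:update-rule} as a Banach-type fixed-point iteration with contraction constant $\Lambda(\alpha_\ast) = (\lambda_{\max}-\lambda_{\min})/(\lambda_{\max}+\lambda_{\min}) < 1$.

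First I would verify that $u^\ast$ is a fixed point of the map $u \mapsto u - \alpha_\ast (I-\Delta)^{-1}(\mathcal{L} u - f)$. This is immediate: since $\mathcal{L} u^\ast = f$ in $H^{-1}(\bR^d)$, applying the bounded operator $(I-\Delta)^{-1}:H^{-1}(\bR^d) \to H^{1}(\bR^d)$ gives $(I-\Delta)^{-1}(\mathcal{L} u^\ast - f) = 0$, so $u^\ast$ is preserved by the update.

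Next I would write the error recursion by subtracting $u^\ast$ from both sides of \eqref{eq:update-rule}, yielding
\begin{equation*}
    u_{t+1} - u^\ast = \bigl(I - \alpha_\ast (I-\Delta)^{-1} \mathcal{L}\bigr)(u_t - u^\ast).
\end{equation*}
Now apply the contraction bound \eqref{eq:contract2} from Proposition~\ref{prop:contract} to each step. Iterating $T$ times and taking $H^1$-norms gives
\begin{equation*}
    \norm{u_T - u^\ast}_{H^1(\bR^d)} \leq \left(\frac{\lambda_{\max}-\lambda_{\min}}{\lambda_{\max}+\lambda_{\min}}\right)^T \norm{u_0 - u^\ast}_{H^1(\bR^d)}.
\end{equation*}

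Finally, I would solve for the smallest $T$ guaranteeing the right-hand side is at most $\epsilon$. Taking logarithms and using that $\ln \bigl((\lambda_{\max}-\lambda_{\min})/(\lambda_{\max}+\lambda_{\min})\bigr) = -\ln\bigl((\lambda_{\max}+\lambda_{\min})/(\lambda_{\max}-\lambda_{\min})\bigr) < 0$, the inequality rearranges precisely to the lower bound on $T$ stated in the corollary. There is no genuine obstacle here; the step requiring the most care is just the sign-flip when dividing by the negative logarithm of the contraction factor, and making sure the direction of inequality matches the stated bound.
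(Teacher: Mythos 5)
Your proposal is correct and follows essentially the same route as the paper: subtract $u^\ast$ from the update to obtain the error recursion $u_{t+1}-u^\ast = (I-\alpha_\ast(I-\Delta)^{-1}\calL)(u_t-u^\ast)$, apply the contraction estimate \eqref{eq:contract2} iteratively, and solve the resulting geometric bound for $T$. The only (harmless) addition is your explicit check that $u^\ast$ is a fixed point, which the paper leaves implicit.
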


Let us remark that the idea of using iterative scheme to establish neural network representation results of solutions to PDEs is not new, see e.g., \cites{khoo2017solving, Marwah21}, similar ideas have been also used to construct neural network architectures inspired from iterative schemes, see e.g., \cites{yang2016deep, gilmer2017neural}. 
Closely related to our setting, the work \cite{Marwah21} uses a steepest descent iteration with the right hand side of the equation assumed to be in the span of first several eigenfunctions of the elliptic operator, while \cite{khoo2017solving} considered general right hand side, but only after discretization which also effectively truncates the problem onto a finite dimensional subspace. These restrictions were made to limit the condition number of the iteration. Unlike those works using standard steepest descent iterations, by using the preconditioning technique, we can deal with general right hand side without restricting to a finite-dimensional subspace.

\subsection{Algebra of Barron functions and representation of the solution}
\label{sec:algebra-Barron}
Corollary \ref{cor:iterate-number} in the previous subsection shows that we can obtain an approximate solution  by running the iteration \eqref{eq:update-rule}. To complete the proof of Theorem \ref{thm:main}, we show in this subsection that the iteration \eqref{eq:update-rule} can be carried out in the Barron space $\calB^1_{R}(\bR^d)$, i.e. each iteration $u_t \in \calB^1_{R}(\bR^d)$ (with the support radius $R$ potentially depending  on $t$). To this end, we first need to  establish the closeness of Barron space under function operations involved in the iteration.  In fact, by decomposing each of the iteration step in  \eqref{eq:update-rule} into two steps, we can write  
\begin{equation}\label{eq:update-rule-v}
\begin{cases}
    v_t=\calL u_t-f=-\sum_{i,j}(\partial_i A_{ij}\partial_j u_t+A_{ij}\partial_{ij} u_t)+ cu_t- f,\\
    u_{t+1}=u_t-\alpha(I-\Delta)^{-1} v_t.
\end{cases}
\end{equation}
Thus, to show that the iterate $u_t$ remains in Barron space, it suffices to establish that addition, scalar multiplication, product, differentiation, and action of $(I-\Delta)^{-1}$ are closed in the Barron space. The closedness of Barron functions under those operations are not only useful for proving our main results, but also of its own interest. The next two lemmas summarize the algebras  and the stability estimate of the inverse $(I-\Delta)^{-1}$ in the Barron space. Their proofs can be found in Appendix~\ref{sec:pf-alg-Barron}. 

\begin{lemma}[Algebras in Barron spaces]
\label{lem:alg-Barron}
The followings hold:
\begin{itemize}
    \item[(i)] (Addition) Suppose that $\norm{g_i}_{\calB^1_{R_i}(\bR^d)}<\infty,\ i=1,2,\dots,k$. Then $\norm{g_1+\dots+g_k}_{\calB^1_R(\bR^d)}\leq \sum_{1\leq i\leq k}\norm{g_i}_{\calB^1_{R_i}(\bR^d)}$, where $R=\max_{1\leq i\leq k}R_i$.
    \item[(ii)] (Scalar multiplication) Suppose that $\norm{g}_{\calB^1_R(\bR^d)}<\infty$ and that $\lambda\in\bR$. Then $\norm{\lambda g}_{\calB^1_R(\bR^d)}= |\lambda|\norm{g}_{\calB^1_R(\bR^d)}$.
    \item[(iii)] (Product) Suppose that Assumption~\ref{asp:sigma} and Assumption~\ref{asp:multi} hold and that $\norm{g_i}_{\calB^1_{R_i}(\bR^d)}<\infty$ for $i=1,2$. Then $\norm{g_1 g_2}_{\calB^1_R(\bR^d)}\leq \ell_m \norm{g}_{\calB^1_{R_1}(\bR^d)}\norm{g}_{\calB^1_{R_2}(\bR^d)}$, where $R=R_m(R_1+R_2)$ with $R_m$ and $\ell_m$ being constants in Assumption~\ref{asp:multi}. 
    \item[(iv)] (Derivatives) Suppose that Assumption~\ref{asp:sigma} and Assumption~\ref{asp:deri} hold and that $\norm{g}_{\calB^1_R(\bR^d)}<\infty$ with $R<\infty$. Then $\norm{\partial_i g}_{\calB^1_{R_{d,1}R}(\bR^d)}\leq \ell_{d,1} R \norm{g}_{\calB^1_R(\bR^d)}$ and $\norm{\partial_{ij} g}_{\calB^1_{R_{d,2}R}(\bR^d)}\leq \ell_{d,2} R^2 \norm{g}_{\calB^1_R(\bR^d)}$ for any $i,j\in\{1,2,\dots,d\}$, where $R_{d,1}$, $R_{d,2}$, $\ell_{d,1}$, and $\ell_{d,2}$ are constants in Assumption~\ref{asp:deri}.
\end{itemize}
\end{lemma}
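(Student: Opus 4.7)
The plan is to prove the four parts separately, but all four rely on the same underlying move: given an integral representation $g=\int a\sigma(w^\top x+b)\rho(da,dw,db)$, push forward $\rho$ under a suitable map of the parameter space so that the resulting signed measure still has total mass one and still integrates to the target function. I will always work with a near-optimal probability measure for each input function (picking up a vanishing $\veps$ in the $|a|$-moment) and then send $\veps\to 0$ at the end.

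For part (i), given $\rho_i$ representing $g_i$ with $\int |a|d\rho_i\le\|g_i\|_{\calB^1_{R_i}}+\veps$, I will fix weights $\lambda_i>0$ summing to $1$ and define $\rho:=\sum_i \lambda_i (T_{1/\lambda_i})_*\rho_i$, where $T_c(a,w,b)=(ca,w,b)$. A direct computation shows that $\int a\sigma(w^\top x+b)d\rho=\sum_i g_i$, the support of the $w$-marginal of $\rho$ lies in $\olB^d_R$ with $R=\max_i R_i$, and $\int |a|d\rho=\sum_i \int|a|d\rho_i$, independently of the choice of $\lambda_i$; sending $\veps\to 0$ gives the claim. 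Part (ii) is the single-map special case: for the push-forward of $\rho$ under $(a,w,b)\mapsto(\lambda a,w,b)$ the integral picks up $\lambda$ and the $|a|$-moment picks up $|\lambda|$ exactly.

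Part (iii) is where the hypothesis on $h(y_1,y_2)=\sigma(y_1)\sigma(y_2)$ enters. Using a near-optimal representation $h(y_1,y_2)=\int \tilde a\,\sigma(\tilde w_1 y_1+\tilde w_2 y_2+\tilde b)\,d\nu(\tilde a,\tilde w_1,\tilde w_2,\tilde b)$ with $(\tilde w_1,\tilde w_2)$ supported in $\olB^2_{R_m}$ and near-optimal representations $\rho_1,\rho_2$ for $g_1,g_2$, I substitute $y_j=w_j^\top x+b_j$ to obtain
\begin{equation*}
    g_1(x)g_2(x)=\int a_1 a_2\tilde a\,\sigma\bigl((\tilde w_1 w_1+\tilde w_2 w_2)^\top x+\tilde w_1 b_1+\tilde w_2 b_2+\tilde b\bigr)\,d\rho_1\,d\rho_2\,d\nu.
\end{equation*}
The effective $w$-variable has norm at most $R_m(R_1+R_2)$, and Fubini factors the $|a|$-moment as the product of three factors bounded by $\|g_1\|_{\calB^1_{R_1}}$, $\|g_2\|_{\calB^1_{R_2}}$, and $\ell_m$ (after sending $\veps\to 0$). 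A final normalization (converting the $|a_1 a_2\tilde a|$-weighted signed measure into a probability measure by absorbing its total mass into the $a$-coordinate via part (ii)) yields the claimed product estimate. I expect this step to be the main technical obstacle, since one has to verify measurability and bookkeeping of supports carefully when composing three parameter spaces.

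Part (iv) follows the same substitution philosophy. Differentiating the integral representation under the integral sign (justified by boundedness of $\sigma'$, $\sigma''$ and compact support of $w$) gives $\partial_i g(x)=\int a w_i\,\sigma'(w^\top x+b)d\rho$ and $\partial_{ij}g(x)=\int a w_i w_j\,\sigma''(w^\top x+b)d\rho$. Substituting a near-optimal Barron representation of $\sigma'$ (respectively $\sigma''$) in place of $\sigma'(w^\top x+b)$ produces a representation in terms of $\sigma$ applied to an affine function of $x$ with new direction $\tilde w\, w$ of norm at most $R_{d,1}R$ (respectively $R_{d,2}R$). The $|a|$-moment of the resulting representation is bounded by $R\,\ell_{d,1}\,\|g\|_{\calB^1_R}$ for first derivatives, using $|w_i|\le R$, and by $R^2\,\ell_{d,2}\,\|g\|_{\calB^1_R}$ for second derivatives, using $|w_i w_j|\le R^2$. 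Normalizing by the total mass as in part (iii) finishes the argument.
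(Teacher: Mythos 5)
Your proposal follows essentially the same pushforward construction as the paper in all four parts: take near-optimal representing measures, form products, compose parameter-space maps, push forward, then take $\veps\to0$. Two small points worth flagging. First, in part (ii) you only argue one direction: pushing $\rho$ forward under $(a,w,b)\mapsto(\lambda a,w,b)$ bounds the infimum from above, giving $\norm{\lambda g}_{\calB^1_R}\le|\lambda|\norm{g}_{\calB^1_R}$, but the statement asserts equality. The reverse inequality comes from applying the same bound with $\lambda^{-1}$ to $\lambda g$ (the case $\lambda=0$ being trivial), and the paper does this explicitly; your ``picks up $|\lambda|$ exactly'' describes the moment of one particular measure, not the infimum. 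Second, in part (iii) you speak of normalizing a ``signed measure'' by absorbing its mass into the $a$-coordinate. This step is unnecessary: the pushforward of the probability measure $\rho_1\times\rho_2\times\nu$ under $(a_1,w_1,b_1,a_2,w_2,b_2,\tilde a,\tilde w,\tilde b)\mapsto(a_1a_2\tilde a,\,\tilde w_1 w_1+\tilde w_2 w_2,\,\tilde w_1 b_1+\tilde w_2 b_2+\tilde b)$ is automatically a probability measure, so the $|a|$-moment bound follows directly from Fubini with nothing to renormalize. Neither issue affects the substance — the supports, the factorization of moments, and the differentiation-under-the-integral justification are all handled the way the paper does them.
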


\begin{lemma}[Applying $(I-\Delta)^{-1}$ on Barron functions]\label{lem:preconditioning}
Suppose that $\norm{g}_{\calB^1_R(\bR^d)}<\infty$. Then $\norm{(I-\Delta)^{-1}g}_{\calB^1_R(\bR^d)}\leq \norm{g}_{\calB^1_R(\bR^d)}$.
\end{lemma}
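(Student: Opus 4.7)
The approach rests on a single observation: $(I-\Delta)^{-1}$ acts by convolution with the Bessel potential kernel $K$ on $\bR^d$ (with Fourier symbol $\widehat{K}(\xi)=1/(1+\norm{\xi}^2)$), and $K$ is a \emph{probability density}. Non-negativity follows from the subordination identity $K(y)=\int_0^\infty e^{-t}p_t(y)\,dt$ with $p_t$ the Gaussian heat kernel, and unit mass from $\widehat{K}(0)=1$. Under Assumption~\ref{asp:sigma}, any $g$ with $\norm{g}_{\calB^1_R(\bR^d)}<\infty$ is bounded (by $C_0\int|a|\,d\rho$ for any representing $\rho$), so $K*g$ is well defined, bounded, and solves $(I-\Delta)u=g$ distributionally; accordingly I take $(I-\Delta)^{-1}g := K*g$.

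The main step is to propagate a near-optimal Barron representation of $g$ through the convolution with $K$ and read off a Barron representation of $(I-\Delta)^{-1}g$. Given $\varepsilon>0$, pick a probability measure $\rho$ supported on $\bR\times\olB_R^d\times\bR$ with $g(x)=\int a\,\sigma(w^\top x+b)\,d\rho$ and $\int|a|\,d\rho\leq\norm{g}_{\calB^1_R(\bR^d)}+\varepsilon$. Fubini (justified since $\sigma$ is bounded and $K\in L^1$) gives
\begin{equation*}
    (I-\Delta)^{-1}g(x)=\int a\left[\int K(y)\,\sigma(w^\top x-w^\top y+b)\,dy\right]d\rho(a,w,b).
\end{equation*}
For each fixed $w$ let $\nu_w$ be the pushforward of the probability measure $K(y)\,dy$ under the linear map $y\mapsto -w^\top y$; this is a Borel probability measure on $\bR$ (a Dirac mass at $0$ when $w=0$). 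The inner bracket equals $\int\sigma(w^\top x+b+s)\,d\nu_w(s)$, and substituting $b'=b+s$ yields
\begin{equation*}
    (I-\Delta)^{-1}g(x)=\int a\,\sigma(w^\top x+b')\,d\mu(a,w,b'),
\end{equation*}
where $\mu$ is the probability measure obtained by drawing $(a,w,b)\sim\rho$, then $s\sim\nu_w$, and setting $b'=b+s$.

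By construction, the $(a,w)$-marginal of $\mu$ coincides with that of $\rho$; in particular $\mu$ is supported on $\bR\times\olB_R^d\times\bR$ and $\int|a|\,d\mu=\int|a|\,d\rho$. Hence $\norm{(I-\Delta)^{-1}g}_{\calB^1_R(\bR^d)}\leq\norm{g}_{\calB^1_R(\bR^d)}+\varepsilon$, and letting $\varepsilon\to 0$ finishes the proof. I foresee no serious obstacle; the only mildly delicate point is the disintegration-style construction of $\mu$ from the conditional kernels $\nu_w$, which is standard once one recognizes that convolution by the \emph{probability} kernel $K$ is the essential mechanism preserving the $\calB^1_R$-norm.
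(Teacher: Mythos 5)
Your proof is correct and reaches the conclusion by a genuinely more direct route than the paper's. The paper first treats the one-dimensional case explicitly, computing the Green's function $G(x)=\tfrac{1}{2}e^{-|x|}$ for $I-\tfrac{d^2}{dx^2}$ and noting $\int|G|=1$, then handles general $d$ by invoking orthogonal invariance of $I-\Delta$ and of the Barron norm (reducing $(I-\Delta)^{-1}\sigma(w^\top\cdot+b)$ to the 1D case along the $w$-direction), followed by a Jensen/convexity step to pass from single ridge functions to the full integral representation. You instead work directly in $\bR^d$ with the Bessel potential kernel $K$, use the subordination formula $K=\int_0^\infty e^{-t}p_t\,dt$ to establish $K\geq 0$ and $\widehat{K}(0)=1$ to get unit mass, and then a single Fubini step turns $K\ast g$ into a Barron representation with the same $(a,w)$-marginal and a shifted $b$-coordinate. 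This avoids both the explicit 1D reduction and the Jensen/convexity step, and your disintegration construction of $\mu$ is also cleaner than the paper's pushforward of $\rho\times\text{Lebesgue}$ (which yields an a priori unnormalized measure and relies on the scale-invariance of the Barron norm implicitly). The only ingredient you use beyond the paper's toolkit is the subordination identity, which the paper's 1D computation replaces with an explicit formula; the trade-off is that the paper's version displays the kernel concretely while yours generalizes without friction to any convolution operator whose kernel is a sub-probability density.
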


we include a proof for Lemma~\ref{lem:preconditioning} in Appendix~\ref{sec:pf-alg-Barron} that uses similar arguments from \cite{E20}, though the analysis in \cite{E20} is for $d\geq 3$.
The lemmas above lead to the following recursive estimate on the Barron norm of $u_t$.

\begin{lemma}\label{lem:esti-utplus1}
Suppose that Assumption~\ref{asp:sigma}, Assumption~\ref{asp:multi}, and Assumption~\ref{asp:deri} hold. If $\norm{u}_{\calB^1_{R_{u,t}}}<\infty$ with $R_{u,t}<\infty$, then $u_{t+1}$ defined in \eqref{eq:update-rule} or \eqref{eq:update-rule-v} satisfies that \begin{equation}\label{eq:esti-utplus1}
    \norm{u_{t+1}}_{\calB^1_{R_{u,t+1}}(\bR^d)}\leq\left( \alpha\ell_m \ell_A (\ell_{d,1}^2 R_A R_{u,t}+ \ell_{d,2} R_{u,t}^2) d^2+\alpha\ell_m \ell_c+1\right)\norm{u_t}_{\calB^1_{R_{u,t}}(\bR^d)}+\alpha\ell_f,
\end{equation}
for any
\begin{equation}\label{eq:R-utplus1}
    R_{u,t+1}\geq \max\{R_m R_{d,1}( R_{u,t}+ R_A), R_m(R_{d,2}R_{u,t}+R_A), R_m(R_{u,t}+R_c), R_{u,t}, R_f\}.
\end{equation}
\end{lemma}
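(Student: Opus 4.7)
The plan is to bound the Barron norm of each term in the explicit expansion \eqref{eq:update-rule-v} of $v_t = \calL u_t - f$ by repeatedly applying the algebra properties in Lemma \ref{lem:alg-Barron}, then pass through $(I - \Delta)^{-1}$ via Lemma \ref{lem:preconditioning}, and finally add back $u_t$ itself via additivity. There are no new ideas needed beyond the lemmas already stated: the proof is essentially a careful bookkeeping of Barron norms and $w$-support radii through the operations of differentiation, pointwise multiplication, summation over the $d^2$ pairs of indices, and convolution against the Bessel potential.

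Concretely, I will first apply Lemma \ref{lem:alg-Barron}(iv) to the factors appearing in $v_t$. For each $i,j$, the first derivatives $\partial_j u_t$ and $\partial_i A_{ij}$ lie in Barron spaces of support radii $R_{d,1} R_{u,t}$ and $R_{d,1} R_A$ with norms at most $\ell_{d,1} R_{u,t}\|u_t\|_{\calB^1_{R_{u,t}}}$ and $\ell_{d,1} R_A \ell_A$ respectively; the second derivative $\partial_{ij} u_t$ has support radius $R_{d,2} R_{u,t}$ and norm at most $\ell_{d,2} R_{u,t}^2 \|u_t\|_{\calB^1_{R_{u,t}}}$. I will then apply the product rule in Lemma \ref{lem:alg-Barron}(iii) to each of $\partial_i A_{ij}\cdot\partial_j u_t$, $A_{ij}\cdot\partial_{ij} u_t$, and $c\cdot u_t$, which multiplies norms by a factor $\ell_m$ and produces the support radii $R_m R_{d,1}(R_A + R_{u,t})$, $R_m(R_A + R_{d,2} R_{u,t})$, and $R_m(R_c + R_{u,t})$, respectively.

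Summing over $i,j\in\{1,\dots,d\}$ brings out a factor of $d^2$ in the two terms involving $A_{ij}$, and including $-f$ via Lemma \ref{lem:alg-Barron}(i) contributes $\ell_f$ while enlarging the support radius to cover $R_f$. Combining these estimates yields
$$
\|v_t\|_{\calB^1_{R_{v,t}}(\bR^d)} \leq \ell_m \ell_A \bigl(\ell_{d,1}^2 R_A R_{u,t} + \ell_{d,2} R_{u,t}^2\bigr) d^2 \|u_t\|_{\calB^1_{R_{u,t}}} + \ell_m \ell_c \|u_t\|_{\calB^1_{R_{u,t}}} + \ell_f,
$$
where $R_{v,t}$ is the maximum of the four support radii listed above.

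Finally, Lemma \ref{lem:preconditioning} guarantees that $(I-\Delta)^{-1}$ preserves the support radius and is non-expansive in Barron norm, so $\alpha(I-\Delta)^{-1} v_t$ obeys the same bound multiplied by $\alpha$. Subtracting from $u_t$ via parts (i) and (ii) of Lemma \ref{lem:alg-Barron} then produces \eqref{eq:esti-utplus1}, and the admissible support radius \eqref{eq:R-utplus1} is the maximum of $R_{v,t}$ and $R_{u,t}$ (the latter coming from the surviving $u_t$ summand). The only delicate step is tracking the support radii correctly — every operation potentially enlarges $R$ by a factor or by an additive shift from $R_A,R_c,R_f$ — but verifying that the maximum in \eqref{eq:R-utplus1} dominates each contribution is a routine check rather than a conceptual obstacle.
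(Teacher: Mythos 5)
Your proposal is correct and follows essentially the same route as the paper's own proof: apply Lemma~\ref{lem:alg-Barron}(iv) to bound the first and second derivatives, Lemma~\ref{lem:alg-Barron}(iii) to bound each product term, Lemma~\ref{lem:alg-Barron}(i) to sum over the $d^2$ index pairs and include $-f$, Lemma~\ref{lem:preconditioning} to pass through $(I-\Delta)^{-1}$, and Lemma~\ref{lem:alg-Barron}(i)--(ii) for the final update step. The support-radius bookkeeping you describe (the three product radii together with $R_f$ determine $R_{v,t}$, and \eqref{eq:R-utplus1} is the max of $R_{v,t}$ with $R_{u,t}$) matches the paper exactly.
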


The proof of Lemma~\ref{lem:esti-utplus1} is deferred to Appendix~\ref{sec:pf-alg-Barron}. One observation is that the amplification factor of the Barron norm in Lemma~\ref{lem:esti-utplus1} increases as the support radius $R$ increases. The reason is that differentiating the function would introduce components of $w$ and hence the amplification depends on how large $\norm{w}$ can be and thus the support of the measure. 

One possible direction to improve the estimate is to realize that the preconditioner  $(I-\Delta)^{-1}$ can counteract the action of taking derivatives. It is indeed possible to 
to remove the $R$ dependence from the amplification factor, at least for some specific activation functions, through a more careful analysis. In particular, we have the following lemma for the cosine activation function, the proof of which can also be found in Appendix~\ref{sec:pf-alg-Barron}. 
\begin{lemma}\label{lem:esti-utplus1-cos}
Suppose that Assumption~\ref{asp:coeff-source-Barron} holds. If $\sigma=\cos$ and $\norm{u}_{\calB^1_{R_{u,t}}(\bR^d)}<\infty$ with $R_{u,t}<\infty$, then $u_{t+1}$ defined in \eqref{eq:update-rule} or \eqref{eq:update-rule-v} satisfies
\begin{equation}\label{eq:esti-utplus1-cos}
    \norm{u_{t+1}}_{\calB^1_{R_{t+1}}(\bR^d)}\leq \left(6\alpha \ell_A \max\{R_A^2,1\}d^2 +\alpha\ell_c+1\right)\norm{u_t}_{\calB^1_{R_{u,t}}(\bR^d)}+\alpha\ell_f,
\end{equation}
for any
\begin{equation}\label{eq:R-utplus1-cos}
    R_{u,t+1}\geq R_{u,t}+\max\{R_A,R_c,R_f\}.
\end{equation}
\end{lemma}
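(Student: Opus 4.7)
The plan is to exploit three special algebraic features of $\sigma=\cos$ that together allow the preconditioner $(I-\Delta)^{-1}$ to fully absorb the derivatives appearing in $\calL$, thereby removing the $R_{u,t}$-dependence from the amplification factor that appeared in Lemma~\ref{lem:esti-utplus1}. Specifically: (i) since $\partial_i\cos(w^\top x+b)=w_i\cos(w^\top x+b+\pi/2)$ and $\partial_{ij}\cos(w^\top x+b)=-w_iw_j\cos(w^\top x+b)$, differentiation of a cosine Barron atom preserves its $w$-support without any dilation, multiplying the amplitude $|a|$ by $|w_i|$ or $|w_iw_j|$; (ii) $(I-\Delta)^{-1}$ acts diagonally on cosine atoms, rescaling amplitudes by $(1+\norm{w}^2)^{-1}$; and (iii) the product-to-sum identity $\cos A\cos B=\tfrac{1}{2}[\cos(A-B)+\cos(A+B)]$ implies that the product of two cosine Barron functions with support radii $R_1,R_2$ lies in $\calB^1_{R_1+R_2}(\bR^d)$ with norm bounded by the \emph{product} of the individual Barron norms, with no extra factor $\ell_m$ of the kind that appears in Assumption~\ref{asp:multi}.

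Starting from the two-step form \eqref{eq:update-rule-v}, I would write
\[
u_{t+1}=u_t+\alpha\sum_{i,j}(I-\Delta)^{-1}(\partial_iA_{ij}\cdot\partial_ju_t)+\alpha\sum_{i,j}(I-\Delta)^{-1}(A_{ij}\partial_{ij}u_t)-\alpha(I-\Delta)^{-1}(cu_t)+\alpha(I-\Delta)^{-1}f,
\]
and bound each summand in $\calB^1_{R_{u,t+1}}(\bR^d)$ separately. Parameterizing $A_{ij}$ via a probability measure $\rho_{A_{ij}}$ (supported in $\bR\times\olB^d_{R_A}\times\bR$ with $\int|a|\,\rho_{A_{ij}}\leq\ell_A$) and $u_t$ via $\rho_{u_t}$, features (i)-(iii) realize $(I-\Delta)^{-1}(A_{ij}\partial_{ij}u_t)$ as a $\cos$-Barron function on $\olB^d_{R_A+R_{u,t}}$ whose $\calB^1$-norm is bounded by $\int\!\int\frac{|a\,a'\,w'_iw'_j|}{1+\norm{w\pm w'}^2}\,\rho_{A_{ij}}\rho_{u_t}$, summed over the two sign choices $\pm$ from the product-to-sum decomposition (each carrying coefficient $1/2$).

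The crucial analytic step is the uniform pointwise estimate
\[
\frac{|w'_iw'_j|}{1+\norm{w\pm w'}^2}\leq 4\max\{R_A^2,1\}\qquad\text{for all }\norm{w}\leq R_A,\ w'\in\bR^d,
\]
proved by a two-case split: if $\norm{w'}\leq 2R_A$ then $|w'_iw'_j|\leq\norm{w'}^2\leq 4R_A^2$ and the denominator is at least $1$; if $\norm{w'}>2R_A$ then the triangle inequality gives $\norm{w\pm w'}\geq\norm{w'}-R_A\geq\norm{w'}/2$, so the denominator is at least $\norm{w'}^2/4$ and the ratio is at most $4$. This yields a per-$(i,j)$ bound of $4\max\{R_A^2,1\}\,\ell_A\norm{u_t}_{\calB^1_{R_{u,t}}(\bR^d)}$. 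An analogous argument for $(I-\Delta)^{-1}(\partial_iA_{ij}\partial_ju_t)$ uses $|w_iw'_j|\leq R_A\norm{w'}$ together with the same case split to obtain $\frac{|w_iw'_j|}{1+\norm{w\pm w'}^2}\leq 2\max\{R_A^2,1\}$, contributing $2\max\{R_A^2,1\}\,\ell_A\norm{u_t}$ per $(i,j)$. Summing over $i,j\in\{1,\dots,d\}$ produces the total coefficient $(4+2)d^2=6d^2$ that appears in \eqref{eq:esti-utplus1-cos}.

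The remaining pieces are routine: feature (iii) immediately gives $\norm{cu_t}_{\calB^1_{R_c+R_{u,t}}(\bR^d)}\leq\ell_c\norm{u_t}_{\calB^1_{R_{u,t}}(\bR^d)}$, which is preserved by Lemma~\ref{lem:preconditioning}; and Lemma~\ref{lem:preconditioning} also yields $\norm{(I-\Delta)^{-1}f}_{\calB^1_{R_f}(\bR^d)}\leq\ell_f$. Adding the four contributions to $\norm{u_t}$ produces the claimed recursion, and the support-radius constraint $R_{u,t+1}\geq R_{u,t}+\max\{R_A,R_c,R_f\}$ is exactly what is needed so that every summand lies in $\calB^1_{R_{u,t+1}}(\bR^d)$ (noting that differentiation in feature (i) does \emph{not} enlarge the $w$-support). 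I expect the only delicate step to be the uniform-in-$w'$ estimate displayed above: the fact that its right-hand side carries no $R_{u,t}$-dependence is precisely what lets the preconditioner cancel the two derivatives produced by $A_{ij}\partial_{ij}u_t$ without paying a factor $R_{u,t}^2$, providing the decisive improvement over Lemma~\ref{lem:esti-utplus1}.
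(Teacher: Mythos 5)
Your proposal is correct and follows essentially the same approach as the paper's own proof: expand each of $A_{ij},c,f,u_t$ as a cosine Barron integral, use the derivative identities $\partial_i\cos(w^\top x+b)=w_i\cos(w^\top x+b+\pi/2)$ and $\partial_{ij}\cos(w^\top x+b)=w_iw_j\cos(w^\top x+b+\pi)$ together with the product-to-sum formula to realize $\calL u_t - f$ as a cosine Barron integral with amplitudes carrying the $w$-factors, apply $(I-\Delta)^{-1}$ diagonally by dividing each amplitude by $1+\norm{w_{A,ij}\pm w}^2$ (resp.\ $1+\norm{w_c\pm w}^2$, $1+\norm{w_f}^2$), and then use the crucial uniform two-case estimate $\frac{|w'_iw'_j|}{1+\norm{w\pm w'}^2}\leq 4\max\{R_A^2,1\}$ and $\frac{|w_iw'_j|}{1+\norm{w\pm w'}^2}\leq 2\max\{R_A^2,1\}$ for $\norm{w}\leq R_A$, which is exactly what removes the $R_{u,t}$-dependence. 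Your arithmetic $(4+2)d^2=6d^2\max\{R_A^2,1\}$ reproduces the paper's constants (the paper groups the factor $\tfrac12$ from the product-to-sum formula into its displayed estimate, obtaining $\max\{R_A^2,1\}$ and $2\max\{R_A^2,1\}$ per sign, but after summing over the two $\pm$ choices the totals agree). The only differences are cosmetic: you apply $(I-\Delta)^{-1}$ term-by-term and invoke Lemma~\ref{lem:preconditioning} for the $cu_t$ and $f$ pieces, whereas the paper assembles the whole $v_t$ first and verifies $\tilde v_t=(I-\Delta)^{-1}v_t$ by uniqueness of the tempered-distribution solution of $(I-\Delta)\tilde v_t=v_t$; if you want to be fully rigorous you should include that verification (or an $\eps$-argument in the definition of the Barron norm, which is an infimum), but these are routine.
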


Lemma~\ref{lem:esti-utplus1} and Lemma~\ref{lem:esti-utplus1-cos} estimate the amplification of the Barron norm in each iteration of \eqref{eq:update-rule}. Combining them with the control of number of iterations, Corollary~\ref{cor:iterate-number}, we are ready to finish the proof of Theorem~\ref{thm:main}.

\begin{proof}[Proof of Theorem~\ref{thm:main}]
Fix $u_0=0$ and $\alpha =\frac{2}{\lambda_{\min} + \lambda_{\max}}$. According to Corollary~\ref{cor:iterate-number}, it holds that $\norm{u_T-u^*}_{H^1(\bR^n)}\leq \epsilon$ for any
\begin{equation*}
\begin{split}
    T \geq\left(\ln{\frac{\lambda_{\max}+\lambda_{\min}}{\lambda_{\max}-\lambda_{\min}}}\right)^{-1}\ln{\frac{\norm{u^*}_{H^1(\bR^n)}}{\epsilon}}.
\end{split}
\end{equation*} 
Moreover, thanks to the estimate
\begin{equation*}
    \lambda_{\min}\norm{u^*}_{H^1(\bR^d)}^2\leq \int A \nabla u^*\cdot \nabla u^* dx+\int c|u^*|^2dx=\int f u^* dx\leq \norm{f}_{H^{-1}(\bR^d)}\norm{u^*}_{H^1(\bR^d)},
\end{equation*}
we have $\norm{u^*}_{H^1(\bR^d)}\leq\frac{1}{\lambda_{\min}}\norm{f}_{H^{-1}(\bR^d)}$. Therefore, it suffices to take 
\begin{equation*}
    T = \left\lceil \left(\ln{\frac{\lambda_{\max}+\lambda_{\min}}{\lambda_{\max}-\lambda_{\min}}}\right)^{-1}\ln{\frac{1}{\epsilon}}+\left(\ln{\frac{\lambda_{\max}+\lambda_{\min}}{\lambda_{\max}-\lambda_{\min}}}\right)^{-1}\ln{\frac{\norm{f}_{H^{-1}(\bR^d)}}{\lambda_{\min}}}\right\rceil.
\end{equation*}

Set $R_{u,0}=\max\{R_A,R_c,R_f,1\}$ and $R_{u,t+1}=\max\{2 R_m R_{d,1}, 2 R_m R_{d,2}, 2 R_m, 1\}\cdot R_{u,t}\geq R_{u,t}$. Then \eqref{eq:R-utplus1} is satisfied for any $t$. Let us define a sequence $\{X_t\}_{t\geq 0}$ via $X_0=1$ and $X_{t+1}=\left(\alpha\ell_m\ell_A(\ell_{d,1}^2+\ell_{d,2}) +\frac{\alpha(\ell_m\ell_c+\ell_f)+1}{d^2} \right)R_{u,t}^2 d^2\cdot X_t$. By \eqref{eq:esti-utplus1}, we have $\norm{u_t}_{\calB^1_{R_{u,t}}(\bR^d)}\leq X_t$ for any $t$. Therefore, it holds that
\begin{equation*}
    R_{u,T}=\max\{R_A,R_c,R_f,1\}\cdot \max\{2 R_m R_{d,1}, 2 R_m R_{d,2}, 2 R_m, 1\}^T,
\end{equation*}
and that
\begin{equation*}
\begin{split}
    \norm{u_T}_{\calB^1_{R_{u,T}}(\bR^d)}&\leq X_T\\
    &=\left(\alpha\ell_m\ell_A(\ell_{d,1}^2+\ell_{d,2}) +\frac{\alpha(\ell_m\ell_c+\ell_f)+1}{d^2} \right)^T d^{2 T} (R_{u,0}\cdots R_{u,T-1})^2\\
    &\leq \left(\alpha\ell_m\ell_A(\ell_{d,1}^2+\ell_{d,2}) +\frac{\alpha(\ell_m\ell_c+\ell_f)+1}{d^2} \right)^T d^{2T}\\
    &\qquad \cdot \left(\max\{R_A,R_c,R_f,1\}\right)^T\cdot \max\{2 R_m R_{d,1}, 2 R_m R_{d,2}, 2 R_m, 1\}^{T^2}.
\end{split}
\end{equation*}
The first part of Theorem~\ref{thm:main} is established by setting $u=u_T$ and $R=R_{u,T}$.

If $\sigma=\cos$, \eqref{eq:R-utplus1-cos} is satisfied by setting
\begin{equation*}
    R_{u,t}=\max\{R_A,R_c,R_f\}\cdot t.
\end{equation*}
Define $Y_0=0$ and $Y_{t+1}=\left(6\alpha \ell_A \max\{R_A^2,1\}d^2 +\alpha\ell_c+1\right)Y_t+\alpha\ell_f$. By \eqref{eq:esti-utplus1-cos}, we obtain that $\norm{u_t}_{\calB^1_{R_{u,t}}(\bR^d)}\leq Y_t$ for any $t$, and in particular that
\begin{equation*}
    \norm{u_T}_{\calB^1_{R_{u,T}}(\bR^d)}\leq Y_T=\frac{\alpha\ell_f\left(\left(6\alpha \ell_A \max\{R_A^2,1\}d^2 +\alpha\ell_c+1\right)^T-1\right)}{6\alpha \ell_A \max\{R_A^2,1\}d^2 +\alpha\ell_c},
\end{equation*}
which finishes the proof by setting $u=u_T$ and $R=R_{u,T}$. 
\end{proof}

Theorem~\ref{thm:main2} is then a corollary of Theorem~\ref{thm:main} and Theorem~\ref{thm:approx} (the approximation theorem).

\begin{proof}[Proof of Theorem~\ref{thm:main2}]
Theorem~\ref{thm:main2} follows directly from applying Theorem~\ref{thm:approx} with error tolerance $\nicefrac{\eps}{2}$ and applying Theorem~\ref{thm:main} with error tolerance $\nicefrac{\eps}{2}$.
\end{proof}

\section{Conclusion}
\label{sec:conclude}
In this work, we establish the approximation rate for the solution of a second-order elliptic PDE by a Barron function and by a two-layer neural network. Under the assumption that the coefficients and the source of the PDE are all in the Barron spaces with some compact support property on the underlying probability measure, the approximation rate is shown to depend at most polynomially on the dimension. Therefore, our results indicate that even a neural network as simple as a two-layer network with a single activation function can have adequate representation ability to encode the solution of an elliptic PDE, without incurring the CoD. Our result provides theoretical guarantee for numerical methods for solving high-dimensional PDEs using neural networks. 

For future directions, it is of interest to extend the functional analysis framework to more general activation functions (such as unbounded ones) and more general neural network architectures. One interesting direction is to establish depth separation result for representing PDE solutions. Our analysis also indicates some potential benefit of using periodic activation function such as cosine in terms of approximation, further studies and understanding of the choice of activation function and architecture are crucial. Moreover, while we focus on approximation error, generalization error and analysis of training should also be considered in future works.

It is possible to extend the approximation results to a wider range of high-dimensional PDEs such as parabolic PDEs, PDE eigenvalue problems, and nonlinear equations such as those arise from control theory. The analysis tools and characterization of Barron space we establish in this work would be useful for these future studies. 

\bibliographystyle{amsxport}
\bibliography{references}

\appendix

\section{Validity of assumptions}\label{app:asmp}
In this section, we show that the set of right hand side $f$ satisfying Assumption \ref{asp:coeff-source} and Assumption \ref{asp:coeff-source-Barron}, and the set of activation functions $\sigma$ satisfying Assumption  \ref{asp:multi}  and Assumption  \ref{asp:deri}
are not empty. We first give a concrete example of $f$ that fulfills  Assumption \ref{asp:coeff-source} and Assumption \ref{asp:coeff-source-Barron}.

\begin{proposition}\label{prop:f-example}
Suppose that $\sigma=\cos$. Consider a complex-valued function $f_0\in L^2(\bR^d)\cap L^1(\bR^d)$ that is compactly supported,  it holds that $f=\text{Re}(\mathcal{F}^{-1}f_0)\in L^2(\bR^d)\cap \calB^1_{R_f}(\bR^d)$ for some $0<R_f<+\infty$, where $\calF^{-1}$ is the inverse Fourier transform.
\end{proposition}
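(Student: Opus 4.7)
The $L^2$ part of the conclusion is immediate: since $f_0\in L^2(\bR^d)$, Plancherel's theorem gives $\calF^{-1}f_0\in L^2(\bR^d)$, and taking the real part preserves this, so $f\in L^2(\bR^d)$. The substantive work is to exhibit an explicit probability measure $\rho$ with compactly supported $w$-marginal that witnesses $f\in\calB^1_{R_f}(\bR^d)$.

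I would begin by expressing $f_0$ in polar form. Since $f_0$ is complex-valued and measurable, we may define a measurable phase $\theta:\bR^d\to[0,2\pi)$ by $\theta(\xi)=\arg(f_0(\xi))$ on $\{f_0\neq 0\}$ and $\theta(\xi)=0$ otherwise, so that $f_0(\xi)=\lvert f_0(\xi)\rvert e^{\I\theta(\xi)}$. Because $f_0\in L^1(\bR^d)$, the inverse Fourier transform may be evaluated as an absolutely convergent integral; taking the real part and combining $e^{\I(\xi\cdot x+\theta(\xi))}$ with its conjugate yields the pointwise identity
\begin{equation*}
f(x)=\frac{1}{(2\pi)^{d/2}}\int_{\bR^d}\lvert f_0(\xi)\rvert\cos(\xi\cdot x+\theta(\xi))\ud\xi,\qquad x\in\bR^d,
\end{equation*}
which already writes $f$ as an integral of $\cos$-ridge functions against the finite nonnegative measure $d\mu(\xi):=(2\pi)^{-d/2}\lvert f_0(\xi)\rvert\ud\xi$ on $\bR^d$.

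To match Definition~\ref{def:barron}, I would choose $R_f>0$ large enough that $\mathrm{supp}(f_0)\subset\olB^d_{R_f}$, using the compact support assumption. Set $Z:=\|f_0\|_{L^1}/(2\pi)^{d/2}$, which is finite because $f_0\in L^1$, and define $\rho$ as the pushforward of the probability measure $Z^{-1}d\mu$ (viewed on $\olB^d_{R_f}$) under the measurable map $\xi\mapsto(Z,\xi,\theta(\xi))\in\bR\times\olB^d_{R_f}\times\bR$. Then $\rho$ is a probability measure supported on $\bR\times\olB^d_{R_f}\times\bR$, and a direct change of variables with $\sigma=\cos$ gives
\begin{equation*}
\int a\cos(w^{\top}x+b)\rho(da,dw,db)=Z\int\cos(\xi\cdot x+\theta(\xi))\,\frac{d\mu(\xi)}{Z}=f(x),
\end{equation*}
so $\rho$ is admissible in Definition~\ref{def:barron} and yields
\begin{equation*}
\|f\|_{\calB^1_{R_f}(\bR^d)}\leq\int\lvert a\rvert\,\rho(da,dw,db)=Z=\frac{\|f_0\|_{L^1}}{(2\pi)^{d/2}}<\infty.
\end{equation*}

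The argument is essentially bookkeeping, with no genuine obstacle; the only mild care-point is the measurable selection of the phase $\theta$, which is handled by the explicit definition above. Compact support of $f_0$ is used precisely to bound the $w$-marginal, and $f_0\in L^1$ is what makes the Barron norm finite, so both hypotheses are used in an essential way.
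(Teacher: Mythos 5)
Your argument is correct and matches the paper's proof essentially line for line: polar decomposition of $f_0$, rewrite $\mathrm{Re}(\calF^{-1}f_0)$ as an integral of $\cos$-ridge functions against the density $\lvert f_0\rvert$, normalize to a probability measure, push forward under $\xi\mapsto(\text{const},\xi,\theta(\xi))$, use compact support of $f_0$ to bound the $w$-marginal, and invoke Plancherel for the $L^2$ claim. The only cosmetic difference is that you absorb the $(2\pi)^{-d/2}$ factor into the intermediate measure $\mu$, whereas the paper keeps it in the $a$-coordinate of the pushforward; the resulting $\rho$ and the bound $\|f\|_{\calB^1_{R_f}}\leq \|f_0\|_{L^1}/(2\pi)^{d/2}$ are the same, and your explicit measurable choice of the phase $\theta$ is a small point of extra care that the paper leaves implicit.
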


We first introduce some concepts and facts, which will be useful for proving Proposition~\ref{prop:f-example} and other results.

\paragraph{\textbf{Push-forward measure}} Consider two measure spaces $(X,\mathcal{F}_X)$ and $(Y,\mathcal{F}_Y)$ and a measurable map $T:X\gt Y$. Given any measure $\mu_X$ on $(X,\mathcal{F}_X)$, one can define the push-forward measure $\mu_Y=T_*\mu_X$ via
\begin{equation*}
    \mu_Y(A)=\mu_X(T^{-1}A),\quad A\in\mathcal{F}_Y.
\end{equation*}
If $\mu_X$ is a probability measure, then $\mu_Y=T_*\mu_X$ is also a probability measure. For any integrable function $g:Y\gt\bR$, it holds that
\begin{equation*}
    \int_Y g(y)\mu_Y(dy)=\int_X g(T(x))\mu_X(dx).
\end{equation*}

\paragraph{\textbf{Fourier transform}} Let $\hat{f} := \calF(f)(\xi)$ be the Fourier transform of  $f\in L^1(\bR^d)$, i.e. 
$$
\hat{f}(\xi) = (2\pi)^{-\frac{d}{2}} \int_{\bR^d}e^{-ix\cdot \xi} f(x)dx.
$$
Denote by $\calF^{-1}(\hat{f})$ the inverse Fourier transform of $\hat{f}$, given by
\begin{equation*}
    f(x) = \calF^{-1}(\hat{f})(x) := (2\pi)^{-\frac{d}{2}} \int_{\bR^d}e^{ix\cdot \xi} \hat{f}(\xi)d\xi,
\end{equation*}
if $\hat{f}\in L^1(\bR^d)$. The Fourier transform and the inverse Fourier transform can be extended to the space of tempered distributions, i.e., the dual space $\calS_d'$ of the Schwartz space $\calS_d$. Recall the  Parseval's identity for $f\in L^2(\bR^d)$:
$
\|f\|_{L^2(\bR^d)}^2 = \|\hat{f}\|_{L^2(\bR^d)}^2.
$

\begin{proof}[Proof of Proposition~\ref{prop:f-example}]
We use techniques from \cite{Barron93} to prove this proposition. Let $f_0(\xi)=e^{i\theta(\xi)} F(\xi)$, where $F(\xi)=|f_0(\xi)|$. It holds that
\begin{equation*}
    \begin{split}
        f(x)&=\text{Re}\left((\calF^{-1}f_0)(x)\right)\\
        &=\text{Re}\left((2\pi)^{-\frac{d}{2}}\int e^{i\xi^{\top} x}f_0(\xi)d\xi\right)\\
        &=\text{Re}\left((2\pi)^{-\frac{d}{2}}\int e^{i(\xi^{\top} x+\theta(\xi))}F(\xi)d\xi\right)\\
        &=(2\pi)^{-\frac{d}{2}}\int \cos(\xi^{\top} x+\theta(\xi))\mu(d\xi)\\
        &=\int \frac{a_0}{(2\pi)^{\frac{d}{2}}} \cos(\xi^{\top} x+\theta(\xi))\mu'(d\xi),\quad x\in\bR^d,
    \end{split}
\end{equation*}
where the measure $\mu$ is defined via $\mu(A)=\int_A F(\xi)d\xi$, $a_0=\mu(\bR^d)=\int_{\bR^d}F(\xi)d\xi$, and $\mu'=\mu/a_0$ is a probability measure. Note that $a_0<\infty$ because $f_0\in L^1(\bR^d)$. Set the push-forward measure $\rho=T_*\mu'$ where $T:\bR^d\gt \bR\times\bR^d\times\bR,\ \xi\mapsto\left(\frac{a_0}{(2\pi)^{\frac{d}{2}}},\xi,\theta(\xi)\right)$. Then we obtain that
\begin{equation*}
    f(x)=\int a \cos(w^{\top} x+b)\rho(da,dw,db),\quad x\in\bR^d.
\end{equation*}
Note that $f_0$ is compactly supported. There exists some $R_f<\infty$ such that $f_0$ is supported on $\olB^d_{R_f}$. By definition, we know that $\mu$ and $\mu'$ are both supported on $\olB^d_{R_f}$, and hence that $\rho$ is supported on $\bR\times\olB^d_{R_f}\times\bR$. Therefore, we obtain that
\begin{equation*}
    \norm{f}_{\calB^1_{R_f}(\bR^d)}\leq \int |a|\rho(da,dw,db)=\frac{a_0}{(2\pi)^{\frac{d}{2}}}<\infty,
\end{equation*}
which implies that $f\in \calB^1_{R_f}(\bR^d)$. Moreover, it follows from $\norm{f}_{L^2(\bR^d)}\leq \norm{\calF^{-1}f_0}_{L^2(\bR^d)}=\norm{f_0}_{L^2(\bR^d)}<\infty$ that $f\in L^2(\bR^d)$.
\end{proof}

The next proposition shows that functions who are band-limited, smooth, and periodic satisfy Assumption  \ref{asp:multi}  and Assumption  \ref{asp:deri}.
\begin{proposition}\label{prop:sigma-example}
If $\sigma$ is band-limited, smooth, and periodic, then Assumption~\ref{asp:multi} and Assumption~\ref{asp:deri} are satisfied. In particular, $\sigma=\cos$ satisfies Assumption~\ref{asp:multi} and Assumption~\ref{asp:deri}.
\end{proposition}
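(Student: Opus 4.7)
The plan is to exploit the fact that a smooth, periodic, band-limited function $\sigma$ is necessarily a trigonometric polynomial with only finitely many non-zero Fourier modes. Writing the period as $T$ and $\omega=2\pi/T$, there exist a finite index set $K\subset\ZZ$ symmetric about $0$ and coefficients $c_k\in\CC$ with $c_{-k}=\overline{c_k}$ such that $\sigma(y)=\sum_{k\in K}c_k e^{i\omega k y}$. The strategy is to represent each of $\sigma'$, $\sigma''$, and $h(y_1,y_2)=\sigma(y_1)\sigma(y_2)$ as a \emph{finite} linear combination of shifts (and input rescalings) of $\sigma$, and then to package these combinations as discrete probability measures with compactly supported $w$-marginals in order to meet Definition~\ref{def:barron}.

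The auxiliary fact driving the argument is that translation acts diagonally on the Fourier support of $\sigma$: $\sigma(y+b)=\sum_{k\in K}c_k e^{i\omega kb}e^{i\omega ky}$. Consequently, for $|K|$ distinct shifts $b_1,\dots,b_{|K|}$ chosen so that the Vandermonde-type matrix $(e^{i\omega kb_j})_{k\in K,\,j}$ is invertible, the family $\{\sigma(\,\cdot\,+b_j)\}$ spans the complex space of trigonometric polynomials with frequency support in $K$; taking real linear combinations (permitted by the conjugation symmetry of $K$ and of $(c_k)$) spans the corresponding space of real trigonometric polynomials. Since $\sigma'$ and $\sigma''$ are real trigonometric polynomials with frequency support in $K$, each admits a finite decomposition $\sum_j\alpha_j\sigma(\,\cdot\,+\beta_j)$. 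Wrapping into a discrete probability measure on atoms $(S\,\mathrm{sgn}(\alpha_j),1,\beta_j)$ with weights $|\alpha_j|/S$ (where $S=\sum_j|\alpha_j|$) verifies Assumption~\ref{asp:deri} with $R_{d,1}=R_{d,2}=1$ and Barron norms at most $S$.

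For Assumption~\ref{asp:multi}, expand $h(y_1,y_2)=\sum_{k,l\in K}c_kc_l e^{i\omega(ky_1+ly_2)}$ and observe that each mode $e^{i\omega(ky_1+ly_2)}$ is a one-dimensional complex exponential in the direction $(k,l)$. Picking any $m_0\in K\setminus\{0\}$ and setting $\gamma=1/m_0$, the auxiliary fact applied to the shifted, rescaled family $\{\sigma(\gamma ky_1+\gamma ly_2+b_j)\}_{j}$ expresses the mode $e^{i\omega(ky_1+ly_2)}$ as a finite combination of $\sigma\bigl((\gamma k)y_1+(\gamma l)y_2+b_j\bigr)$; summing over $(k,l)\in K\times K$ produces a representing measure for $h$ whose $w$-marginal is supported in the finite set $\{\gamma(k,l):k,l\in K\}\subset\olB^2_{R_m}$ for $R_m=\gamma\sqrt{2}\max_{k\in K}|k|$, with total $|a|$-mass controlled by $\sum_{k,l}|c_kc_l|$ times a constant depending only on $\sigma$.

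For the concrete case $\sigma=\cos$, no machinery is needed: $\sigma'(y)=\cos(y+\pi/2)$ and $\sigma''(y)=\cos(y+\pi)$ are single translates of $\cos$, giving Barron norms at most $1$ with $R_{d,1}=R_{d,2}=1$; the product-to-sum identity $\cos y_1\cos y_2=\tfrac{1}{2}\cos(y_1+y_2)+\tfrac{1}{2}\cos(y_1-y_2)$ realizes $h$ as a two-atom mixture of Barron norm $1$ with $w$-support $\{(1,1),(1,-1)\}\subset\olB^2_{\sqrt{2}}$. The main technical obstacle in the general case is the Vandermonde inversion step, together with a modest amount of bookkeeping to pass from the complex linear combination on the basis $\{e^{i\omega ky}\}_{k\in K}$ back to the real trigonometric-polynomial structure; this is routine given the conjugation symmetry of $K$ and the coefficients.
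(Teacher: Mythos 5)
Your proof is correct in outline and takes a genuinely different route from the paper's. The paper also begins by writing $\sigma$ as a finite real Fourier sum $\sigma(y)=\sum_{i=1}^k a_i\cos(w_iy+b_i)$, but then reduces both assumptions to the single claim that $\cos$ itself can be written as a finite combination $\sum_j\gamma_j\sigma(\xi_jy+\eta_j)$, after which the explicit cosine identities ($\cos'=\cos(\,\cdot\,+\pi/2)$, the product-to-sum formula, \emph{etc.}) finish the job. To prove that claim the paper uses a clean telescoping induction on the number of modes: for $k\geq2$ one forms $\sigma(\,\cdot\,+2\pi/|w_k|)-\sigma$ (or $\sigma(\,\cdot\,+y_0)-\sigma$ when $w_1=0$) to kill exactly one Fourier mode while preserving the structure, reducing to $k-1$ modes. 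Your approach instead establishes the relevant spanning statement directly via inversion of the evaluation matrix $(c_ke^{i\omega kb_j})_{k,j}$, and for the two-variable function $h$ it uses the rescaling $\gamma=1/m_0$ to reduce each 2D mode $e^{i\omega(ky_1+ly_2)}$ to a one-dimensional directional exponential. Both methods prove the same underlying fact — that translates of $\sigma$ span the real trigonometric polynomials with frequency support in $K$ — so the two proofs buy essentially the same result; the paper's induction is arguably more elementary (no matrix inversion, no complex bookkeeping), while yours gives a more uniform and systematic description of the representing measure.

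One point deserves tightening: the matrix $(e^{i\omega kb_j})_{k\in K,\,j}$ is a \emph{generalized} Vandermonde matrix (the exponents $k\in K$ need not be consecutive integers), and such matrices are not invertible for \emph{every} choice of distinct nodes $e^{i\omega b_j}$ on the circle (for instance, with $K=\{0,2\}$ the nodes $1$ and $-1$ give a singular matrix). You do phrase it as "chosen so that the matrix is invertible," which is honest, but the existence of such a choice still needs a word of justification: it follows, for example, from the linear independence of the distinct characters $\{e^{i\omega ky}\}_{k\in K}$ over $\RR$, which forces the determinant $b\mapsto\det(e^{i\omega kb_j})$ to be a nonzero analytic function of $(b_1,\dots,b_{|K|})$, hence nonvanishing for generic shifts. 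With that remark added, and with the conjugation-symmetry bookkeeping you already sketch to pass to real coefficients, the argument is complete.
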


\begin{proof}
The smoothness and periodicity of $\sigma$ imply that $\sigma$ is equal to its Fourier series (recall that the smoothness of $\sigma$ leads to fast decay of the Fourier coefficients and hence the uniform convergence of Fourier series). Since $\sigma$ is band-limited, its Fourier series only have finitely many terms as $\calF(e^{i a x})=\delta(\xi-a)$. Therefore, there exists $k\in\bNp$ and $\{(a_i,w_i,b_i)\}_{i=1}^k\subset (\bR\backslash\{0\})\times\bR\times\bR$ such that $\sigma(y)=\sum_{i=1}^k a_i\cos(w_i y+b_i)$. Without loss of generality, let us assume that $|w_1|< |w_2|<\dots<|w_k|$. We also assume that $\sigma$ is not a constant as otherwise the results are trivial. Notice that $\cos'(y)=\cos(y+\pi/2)$, $\cos''(y)=\cos(y+\pi)$, and $\cos(y_1)\cos(y_2)=\frac{1}{2}\cos(y_1+y_2)+\frac{1}{2}\cos(y_1-y_2)$. Therefore, it suffices to show that there exists $m\in\bNp$ and $\{(\gamma_i,\xi_i,\eta_i)\}_{i=1}^m\subset\bR\times\bR\times\bR$, such that $\sum_{i=1}^m \gamma_i \sigma(\xi_i y+\eta_i)=\cos(y)$. This is trivial if $k=1$. 

Now we consider $k\geq 2$. If $|w_1|\neq 0$, then it holds that
\begin{equation*}
\begin{split}
    \sigma\left(y+\frac{2\pi}{|w_k|}\right)-\sigma(y)&=\sum_{i=1}^{k-1}\left(a_i\cos\left(w_i y+\frac{2\pi w_i}{|w_k|}+b_i\right)-a_i\cos(w_i y+b_i)\right)\\
    &=\sum_{i=1}^{k-1} a_i'\cos(w_i y +b_i'),
\end{split}
\end{equation*}
where $a_i'\neq 0$ for $1\leq i\leq k-1$. If $w_1=0$, then let us choose $y_0\notin \cup_{2\leq i\leq k}\{2\ell \pi/w_i:\ell\in\mathbb{Z}\}$, then it holds that
\begin{equation*}
\begin{split}
    \sigma(y+y_0)-\sigma(y)&=\sum_{i=2}^k (a_i\cos(w_i y+w_i y_0+b_i)-a_i\cos(w_i y+b_i))\\
    &=\sum_{i=2}^k a_i'\cos(w_i y+b_i'),
\end{split}
\end{equation*}
where $a_i'\neq 0$ for $2\leq i\leq k$. Both cases are reduced to $k-1$. Then we can finish the proof by induction.
\end{proof}

\section{Proofs for Section~\ref{sec:Barron}}
\label{sec:pf-Barron}

In this section, we present proofs of some properties of Barron norms and Barron spaces, say Proposition~\ref{prop:Barron-equal} and Theorem~\ref{thm:approx}. Note that the proof techniques are not new. They are borrowed from \cite{E19} and \cite{Barron93}.

\begin{proof}[Proof of Proposition~\ref{prop:Barron-equal}] 
This proof is modified from \cite{E19}, especially the proof in \cite{E19}*{Section 2.5.1}. If $\norm{g}_{\calB^1_R(\Omega)}=\infty$, it is clear that $\norm{g}_{\calB^p_R(\Omega)}=\infty$ for any $1\leq p\leq \infty$. Thus, we assume that $\norm{g}_{\calB^1_R(\Omega)}<\infty$. By Hölder's inequality, it holds that
\begin{equation*}
    \norm{g}_{\calB^1_R(\Omega)}\leq \norm{g}_{\calB^p_R(\Omega)}\leq \norm{g}_{\calB^\infty_R(\Omega)}.
\end{equation*}
Therefore, it suffices to show that $\norm{g}_{\calB^\infty_R(\Omega)}\leq \norm{g}_{\calB^1_R(\Omega)}$. Consider any $\epsilon>0$, there exists a probability measure $\rho$ supported on $\bR\times \olB^d_R\times\bR$ such that 
\begin{equation*}
    g(x)=\int a\sigma(w^{\top} x+b)\rho(da,dw,db),\quad \forall x\in \Omega,
\end{equation*}
and that 
\begin{equation*}
    \ell:=\int |a|\rho(da,dw,db)\leq\norm{g}_{\calB^1_R(\Omega)}+\epsilon.
\end{equation*}
Define a new probability measure $\mu$ supported on $\{\ell,-\ell\}\times \olB^d_R\times \bR$ via 
\begin{equation*}
    \mu(\{\ell\}\times A)=\frac{1}{\ell}\int_{(0,+\infty)\times A} |a|\rho(da,dw,db),
\end{equation*}
and
\begin{equation*}
    \mu(\{-\ell\}\times A)=\frac{1}{\ell}\int_{(-\infty,0)\times A} |a|\rho(da,dw,db),
\end{equation*}
for any measurable $A\subset \bR^d\times \bR$. Then we have
\begin{equation*}
    \begin{split}
        g(x)=&\int a\sigma(w^{\top} x+b)\rho(da,dw,db)\\
        =&\int_{(0,+\infty)\times\bR^d\times\bR} \ell\cdot \sigma(w^{\top} x+b)\cdot \frac{|a|}{\ell}\rho(da,dw,db)\\
        &\qquad +\int_{(-\infty,0)\times\bR^d\times\bR} (-\ell)\cdot \sigma(w^{\top} x+b)\cdot \frac{|a|}{\ell}\rho(da,dw,db)\\
        =&\int a\sigma(w^{\top} x+b)\mu(da,dw,db),
    \end{split}
\end{equation*}
for any $x\in\Omega$, which combined with support of $\mu$ yields that
\begin{equation*}
    \norm{g}_{\calB^\infty_R(\Omega)}\leq \ell\leq\norm{g}_{\calB^1_R(\Omega)}+\epsilon.
\end{equation*}
Setting $\epsilon\rightarrow 0$, we obtain that $\norm{g}_{\calB^\infty_R(\Omega)}\leq \norm{g}_{\calB^1_R(\Omega)}$.
\end{proof}

Then we prove the approximation theorem in $H^1$ norm, i.e., Theorem~\ref{thm:approx}.

\begin{proof}[Proof of Theorem~\ref{thm:approx}]
We use techniques from the proofs of \cite{E19}*{Theorem 1} and \cite{Barron93}*{Theorem 1, Theorem 2} to prove this theorem. According to Proposition~\ref{prop:Barron-equal}, it holds that $g\in\calB^2(\Omega)$ with $\norm{g}_{\calB^2(\Omega)}=\norm{g}_{\calB^1(\Omega)}$. There exists a probability measure $\rho$ supported on $\bR\times \olB^d_R\times\bR$ such that 
\begin{equation*}
    g(x)=\int a\sigma(w^{\top} x+b)\rho(da,dw,db),\quad x\in \Omega_0\subset\Omega,
\end{equation*}
and
\begin{equation*}
    \int |a|^2\rho(da,dw,db)\leq 2\norm{g}_{\calB^2(\Omega_0)}^2 \leq 2\norm{g}_{\calB^2(\Omega)}^2.
\end{equation*}
The derivatives of $g$ can also be represented in integral from,
\begin{equation*}
    \partial_j g(x)=\int a\langle w,e_j\rangle\sigma'(w^{\top} x+b)\rho(da,dw,db),\quad x\in \Omega_0,\ 1\leq j\leq d,
\end{equation*}
where $e_j$ is a vector in $\bR^d$ with the $j$-th entry being $1$ and other entries being $0$. Note that the derivative and the integral are exchangeable since
\begin{equation*}
    \int \sup_x  \left| a\langle w,e_j\rangle\sigma'(w^{\top} x+b)\right|\rho(da,dw,db)\leq R C_1\int |a|\rho(da,dw,db)<\infty,
\end{equation*}
where the last inequality holds since $\int |a|^2\rho(da,dw,db)<\infty$ and $\rho$ is a probability measure. We sample the set of parameters $\Theta=\{a_i,w_i,b_i\}_{1\leq i\leq k}$ with respect to the product measure $\rho^{\times k}$ and denote the difference between the neural network and the target function $g$ as
\begin{equation*}
    \mathcal{E}_\Theta(x)=\frac{1}{k}\sum_{i=1}^k a_i \sigma(w_i^{\top} x+b_i)- g(x).
\end{equation*}
Then it holds that 
\begin{equation}\label{eq:L2-loss}
\begin{split}
    \bE_{\rho^{\times k}}\norm{\mathcal{E}_\Theta}^2_{L^2(\Omega_0)}=&\int_{(\bR\times\bR^d\times\bR)^k}\int_{\Omega_0} \left(\frac{1}{k}\sum_{i=1}^k a_i \sigma(w_i^{\top} x+b_i)- g(x)\right)^2 dxd\rho^{\times k}\\
    =&\frac{1}{k^2}\int_{\Omega_0}\int_{(\bR\times\bR^d\times\bR)^k} \left(\sum_{i=1}^k \left(a_i \sigma(w_i^{\top} x+b_i)- g(x)\right)\right)^2 d\rho^{\times k}dx\\
    =&\frac{1}{k^2}\int_{\Omega_0}\int_{(\bR\times\bR^d\times\bR)^k} \sum_{i=1}^k\left( a_i \sigma(w_i^{\top} x+b_i)- g(x)\right)^2 d\rho^{\times k}dx\\
    =&\frac{1}{k}\int_{\Omega_0}\int_{\bR\times\bR^d\times\bR} \left(a\sigma(w^{\top} x+b)-g(x)\right)^2\rho(da,dw,db)dx\\
    = &\frac{1}{k}\int_{\Omega_0}\text{Var}_\rho \left(a\sigma(w^{\top} x+b)\right) dx\\
    \leq &\frac{1}{k}\int_{\Omega_0}\bE_\rho \left[\left(a\sigma(w^{\top} x+b)\right)^2\right]dx\\
    \leq & \frac{C_0^2 m(\Omega_0)}{k}\bE_\rho |a|^2\\
    \leq & \frac{2 C_0^2 m(\Omega_0)}{k}\norm{g}_{\calB^2_R(\Omega)}^2,
\end{split}
\end{equation}
and
\begin{align*}
    \bE_{\rho^{\times k}}\norm{\partial_j\mathcal{E}_\Theta}^2_{L^2(\Omega_0)}=&\int_{(\bR\times\bR^d\times\bR)^k}\int_{\Omega_0} \left(\partial_j\left(\frac{1}{k}\sum_{i=1}^k a_i \sigma(w_i^{\top} x+b_i)\right)- \partial_j g(x)\right)^2 dxd\rho^{\times k}\\
    =&\int_{(\bR\times\bR^d\times\bR)^k}\int_{\Omega_0} \left(\frac{1}{k}\sum_{i=1}^k a_i\langle w_i,e_j\rangle \sigma'(w_i^{\top} x+b_i)- \partial_j g(x)\right)^2 dxd\rho^{\times k}\\
    =&\frac{1}{k^2}\int_{\Omega_0}\int_{(\bR\times\bR^d\times\bR)^k} \left(\sum_{i=1}^k \left(a_i \langle w_i,e_j\rangle\sigma'(w_i^{\top} x+b_i)-\partial_j g(x)\right)\right)^2 d\rho^{\times k}dx\\
    =&\frac{1}{k^2}\int_{\Omega_0}\int_{(\bR\times\bR^d\times\bR)^k} \sum_{i=1}^k\left( a_i\langle w_i,e_j\rangle \sigma'(w_i^{\top} x+b_i)-\partial_j g(x)\right)^2 d\rho^{\times k}dx\\
    =&\frac{1}{k}\int_{\Omega_0}\int_{\bR\times\bR^d\times\bR} \left(a\langle w,e_j\rangle\sigma'(w^{\top} x+b)-g(x)\right)^2\rho(da,dw,db)dx\\
    = &\frac{1}{k}\int_{\Omega_0}\text{Var}_\rho \left(a\langle w,e_j\rangle\sigma'(w^{\top} x+b)\right) dx\\
    \leq &\frac{1}{k}\int_{\Omega_0}\bE_\rho \left[\left(a\langle w,e_j\rangle\sigma'(w^{\top} x+b)\right)^2\right]dx,
\end{align*}
which then yields that
\begin{equation}\label{eq:L2-loss-deri}
\begin{split}
    \bE_{\rho^{\times k}}\sum_{j=1}^d\norm{\partial_j\mathcal{E}_\Theta}^2_{L^2(\Omega_0)}\leq& \frac{1}{k}\int_{\Omega_0}\bE_\rho \left[\sum_{j=1}^d \left(a\langle w,e_j\rangle\sigma'(w^{\top} x+b)\right)^2\right]dx\\
    \leq &\frac{R^2}{k}\int_{\Omega_0}\bE_\rho \left[\left(a\sigma'(w^{\top} x+b)\right)^2\right]dx\\
    \leq & \frac{R^2 C_1^2 m(\Omega_0)}{k}\bE_\rho |a|^2\\
    \leq & \frac{2 R^2 C_1^2 m(\Omega_0)}{k}\norm{g}_{\calB^2_R(\Omega)}^2.
\end{split}
\end{equation}
Combining \eqref{eq:L2-loss} with \eqref{eq:L2-loss-deri}, we obtain that
\begin{equation*}
    \bE_{\rho^{\times k}}\norm{\mathcal{E}_{\Theta}}^2_{H^1(\Omega_0)}\leq \frac{2(C_0^2+R^2C_1^2) m(\Omega_0)\norm{g}^2_{\calB^2_R(\Omega)}}{k}.
\end{equation*}
Therefore, there exists some $\Theta=\{a_i,w_i,b_i\}_{1\leq i\leq k}$ such that \eqref{eq:approx-H1} holds.
\end{proof}

\section{Proofs for Section~\ref{sec:preconditioning}}
\label{sec:pf-preconditioning}

In this section, we show the convergence of the iteration \eqref{eq:update-rule}. We first show Proposition~\ref{prop:contract} that states the contraction property. Recall that the Sobolev space $H^s(\bR^d)$ is characterized by the Fourier transform as 
$$
H^s(\bR^d) := \left\{f\in \calS_d' \,| \,  (1 + \norm{\xi}^2 )^{\frac{s}{2}} \hat{f}(\xi) \in L^2(\bR^d)\right\},\quad s\in\bR.
$$
Let us define the operator $P: H^s(\bR^d)\gt H^{s-2}(\bR^d)$ by 
$$
P f = \calF^{-1}\big((1 + \norm{\xi}^2) \hat{f}(\xi)\big).
$$
Given an index $\beta\in \bR$, we also define the fractional power $P^\beta : H^s(\bR^d)\gt H^{s-2\beta}(\bR^d)$ by 
$$
P^\beta f = \calF^{-1}\big((1 + \norm{\xi}^2)^\beta \hat{f}(\xi)\big).
$$
Then $P^{-1}$ is identical to $(I-\Delta)^{-1}$. It is useful to notice that 
\begin{equation}\label{eq:Phalf}
    \|P^{\frac{1}{2}} u\|_{L^2(\bR^d)}^2 = \langle P u, u\rangle_{H^{-1}(\bR^d),H^1(\bR^d)} = \|u\|_{H^1(\bR^d)}^2.
\end{equation}
We first prove some lemmas as the preparation for Proposition~\ref{prop:contract}.

\begin{lemma}\label{lem:bdd-selfadjoint}
Suppose that Assumption~\ref{asp:coeff-source} holds. Then the linear operator $$P^{-\frac{1}{2}}\calL P^{-\frac{1}{2}}:L^2(\bR^d)\gt L^2(\bR^d),$$ is bounded and self-adjoint.
\end{lemma}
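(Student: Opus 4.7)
The plan is to exploit the fact that $P^{-1/2}$ is a real, positive Fourier multiplier and hence acts as an isometric isomorphism between consecutive levels of the Sobolev scale, together with the fact that $\calL$ maps $H^1(\bR^d)$ into $H^{-1}(\bR^d)$ boundedly via its associated symmetric bilinear form. Concretely, I would view $P^{-1/2}\calL P^{-1/2}$ as the composition $L^2(\bR^d) \to H^1(\bR^d) \to H^{-1}(\bR^d) \to L^2(\bR^d)$, where the first and third arrows are $P^{-1/2}$ and the middle one is $\calL$; boundedness and self-adjointness both fall out once each factor is understood on the correct scale.

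For boundedness, the outer two arrows are isometries: from the Fourier multiplier description, $\norm{P^{-1/2}u}_{H^1} = \norm{u}_{L^2}$ (the analog of \eqref{eq:Phalf}) and, dually, $\norm{P^{-1/2}w}_{L^2} = \norm{w}_{H^{-1}}$. For the middle arrow, the bilinear form $a(v,w) := \int A\nabla v\cdot\nabla w\,dx + \int c v w\,dx$ satisfies $|a(v,w)| \leq \lambda_{\max}\norm{v}_{H^1}\norm{w}_{H^1}$ with $\lambda_{\max} = \max\{a_{\max}, c_{\max}\}$ under Assumption~\ref{asp:coeff-source}, so $\norm{\calL v}_{H^{-1}} \leq \lambda_{\max}\norm{v}_{H^1}$. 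Chaining the three bounds yields $\norm{P^{-1/2}\calL P^{-1/2} u}_{L^2} \leq \lambda_{\max}\norm{u}_{L^2}$.

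For self-adjointness, given $u,v \in L^2(\bR^d)$, I would first invoke Plancherel (using that the symbol $(1+\norm{\xi}^2)^{-1/2}$ is real) to shift one copy of $P^{-1/2}$ across the $L^2$ pairing, obtaining $\langle P^{-1/2}\calL P^{-1/2}u, v\rangle_{L^2} = \langle \calL P^{-1/2}u, P^{-1/2}v\rangle_{H^{-1},H^1}$. Next, the symmetry of the form $\langle \calL \phi, \psi\rangle_{H^{-1},H^1} = \int A\nabla\phi\cdot\nabla\psi\,dx + \int c\phi\psi\,dx$, which relies on $A = A^\top$ from Assumption~\ref{asp:coeff-source}, allows swapping the roles of $P^{-1/2}u$ and $P^{-1/2}v$. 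Undoing the Plancherel step on the other side produces $\langle u, P^{-1/2}\calL P^{-1/2}v\rangle_{L^2}$, as required.

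I do not anticipate a genuine obstacle here; the only mild technical point is to justify the identity $\langle P^{-1/2}w, v\rangle_{L^2} = \langle w, P^{-1/2}v\rangle_{H^{-1},H^1}$ cleanly for $w\in H^{-1}(\bR^d)$ and $v\in L^2(\bR^d)$, which reduces to a direct Plancherel computation on the integrand $(1+\norm{\xi}^2)^{-1/2}\hat{w}(\xi)\overline{\hat{v}(\xi)}$ and the observation that the resulting expression is symmetric in the two arguments.
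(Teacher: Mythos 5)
Your proposal is correct and matches the paper's argument essentially step for step: boundedness via the isometry $\|P^{-1/2}u\|_{H^1} = \|u\|_{L^2}$ (the paper's \eqref{eq:Phalf}) together with the $\lambda_{\max}$-bound on the bilinear form, and self-adjointness via shifting $P^{-1/2}$ across the pairing and invoking $A = A^\top$. The only cosmetic difference is that you make the factorization $L^2 \to H^1 \to H^{-1} \to L^2$ explicit, whereas the paper realizes the isometry $P^{-1/2}\colon H^{-1}\to L^2$ on the fly inside a duality supremum $\sup_{\|v\|_{L^2}=1}\langle\cdot,v\rangle_{L^2}$.
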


\begin{proof}
Consider any $u\in L^2(\bR^d)$ with $\norm{u}_{L^2(\bR^d)}=1$. Then $\tilde{u}=P^{-\frac{1}{2}}u\in H^1(\bR^d)$ satisfies that $\norm{\tilde{u}}_{H^1(\bR^d)}=\norm{u}_{L^2(\bR^d)}=1$ by \eqref{eq:Phalf}. It holds that
\begin{align*}
    \norm{P^{-\frac{1}{2}}\calL P^{-\frac{1}{2}}u}_{L^2(\bR^d)}&=\sup_{\norm{v}_{L^2(\bR^d)}=1}\left\langle P^{-\frac{1}{2}}\calL P^{-\frac{1}{2}}u,v \right\rangle_{L^2(\bR^d)}\\
    &\leftstackrel{\tilde{v}=P^{-\frac{1}{2}}v}{=}\sup_{\norm{\tilde{v}}_{H^1(\bR^d)}=1}\langle\calL\tilde{u},\tilde{v}\rangle_{H^{-1}(\bR^d),H^1(\bR^d)}\\
    &=\sup_{\norm{\tilde{v}}_{H^1(\bR^d)}=1}\int_{\bR^d} (A\nabla\tilde{u}\cdot\nabla \tilde{v}+c\tilde{u}\tilde{v})dx\\
    &\leq \sup_{\norm{\tilde{v}}_{H^1(\bR^d)}=1} \max\{a_{\max},c_{\max}\}\norm{\tilde{u}}_{H^1(\bR^d)}\norm{\tilde{v}}_{H^1(\bR^d)} \\
    &=\lambda_{\max}.
\end{align*}
Therefore, $P^{-\frac{1}{2}}\calL P^{-\frac{1}{2}}$ is bounded on $L^2(\bR^d)$.

For any $u,v\in L^2(\bR^d)$ with $\tilde{u}=P^{-\frac{1}{2}}u$ and $\tilde{v}=P^{-\frac{1}{2}}v$, by the symmetry of $A$, we have that
\begin{equation*}
\begin{split}
    \left\langle P^{-\frac{1}{2}}\calL P^{-\frac{1}{2}} u,v\right\rangle_{L^2(\bR^d)}&=\langle \calL\tilde{u},\tilde{v}\rangle_{H^{-1}(\bR^d),H^1(\bR^d)}\\
    &=\int_{\bR^d} (A\nabla \tilde{u}\cdot\nabla\tilde{v}+c\tilde{u}\tilde{v})dx\\
    &=\int_{\bR^d} (\nabla \tilde{u}\cdot A\nabla\tilde{v}+c\tilde{u}\tilde{v})dx\\
    &=\langle \calL\tilde{v},\tilde{u}\rangle_{H^{-1}(\bR^d),H^1(\bR^d)}\\
    &=\left\langle u,P^{-\frac{1}{2}}\calL P^{-\frac{1}{2}} v\right\rangle_{L^2(\bR^d)},
\end{split}
\end{equation*}
which implies that $P^{-\frac{1}{2}}\calL P^{-\frac{1}{2}}$ is self-adjoint on $L^2(\bR^d)$.
\end{proof}

The following lemma will also be useful. Let  $T$ be a bounded linear operator on a Hilbert space $H$. Denote by $\sigma(T)$  the  set of spectrum of $T$ and by $r(T) := \sup \{|\lambda|\, |\, \lambda\in \sigma(T) \}$ the spectrum radius. Define  the numerical range $\mathcal{W}(T)$ of $T$ by 
$$
\mathcal{W}(T) := \{\langle T h, h\rangle, \|h\|=1\}.
$$
The numerical radius is defined as $w(T) := \sup \{|\lambda|\, |\, \lambda\in \mathcal{W}(T) \}$. 
\begin{lemma}\label{lem:radius}
Let $T$ be a bounded linear operator on a Hilbert space $H$. Then 
$$
r(T) \leq w(T).
$$
\end{lemma}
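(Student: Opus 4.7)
The plan is to establish the stronger set inclusion $\sigma(T)\subseteq \overline{\mathcal{W}(T)}$, from which the conclusion $r(T)\leq w(T)$ follows immediately, because
\[
r(T)=\sup_{\lambda\in\sigma(T)}|\lambda|\leq \sup_{\mu\in\overline{\mathcal{W}(T)}}|\mu|=\sup_{\mu\in\mathcal{W}(T)}|\mu|=w(T).
\]
I will prove the inclusion by contrapositive: assume $\lambda\notin\overline{\mathcal{W}(T)}$ and show $T-\lambda I$ is invertible.

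First I would use closedness of $\overline{\mathcal{W}(T)}$ to pick $\delta>0$ so that $|\lambda-\mu|\geq \delta$ for every $\mu\in\overline{\mathcal{W}(T)}$. For any unit vector $h\in H$, since $\langle Th,h\rangle\in \mathcal{W}(T)$, this yields
\[
\bigl|\langle (T-\lambda I)h,h\rangle\bigr|=\bigl|\langle Th,h\rangle-\lambda\bigr|\geq \delta.
\]
Cauchy--Schwarz then gives $\|(T-\lambda I)h\|\geq \delta$ for all unit $h$, so $T-\lambda I$ is bounded below. This immediately implies that $T-\lambda I$ is injective with closed range.

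Next I would apply the same reasoning to the adjoint. The identity $\langle (T^\ast-\bar\lambda I)h,h\rangle=\overline{\langle (T-\lambda I)h,h\rangle}$ shows that $T^\ast-\bar\lambda I$ is likewise bounded below, hence injective. Using the standard Hilbert space relation $\overline{\mathrm{range}(T-\lambda I)}=\ker(T^\ast-\bar\lambda I)^{\perp}=\{0\}^{\perp}=H$, the range of $T-\lambda I$ is dense. Combined with the closed range obtained above, $T-\lambda I$ is surjective and therefore invertible, so $\lambda\notin\sigma(T)$. This finishes the contrapositive and the proof.

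The argument is short and the main subtlety is the passage from being bounded below (injective, closed range) to actual invertibility; the adjoint step is what makes this work on a Hilbert space, and it is the only place where the Hilbert structure, rather than just a Banach norm, is genuinely used.
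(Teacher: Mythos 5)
Your proof is correct and follows essentially the same route as the paper: both derive the inequality from the inclusion $\sigma(T)\subseteq\overline{\mathcal{W}(T)}$. The only difference is that the paper cites a textbook for this inclusion, whereas you supply a self-contained proof of it via the contrapositive (boundedness below of both $T-\lambda I$ and its adjoint, forcing invertibility), which is a standard and valid argument.
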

\begin{proof}
The proof follows directly from the fact that 
$$
\sigma(T) \subset \overline{\mathcal{W}(T)}.
$$
See e.g. \cite{istratescu2020introduction}*{Theorem 6.2.1} for the statement and proof of the above.
\end{proof}

We then prove Proposition~\ref{prop:contract}.

\begin{proof}[Proof of Proposition~\ref{prop:contract}]
First it follows from \eqref{eq:Phalf} that 
\begin{align*}
\norm{(I-\alpha (I-\Delta)^{-1}\calL)u}_{H^1(\bR^d)}^2 & = \norm{(I-\alpha P^{-1}\calL)u}_{H^1(\bR^d)}^2\\
&= \norm{P^{\frac{1}{2}}(I-\alpha P^{-1}\calL)u }_{L^2(\bR^d)}^2\\
& = \norm{P^{\frac{1}{2}}(I-\alpha P^{-1}\calL)P^{-\frac{1}{2}} P^{\frac{1}{2}}u }_{L^2(\bR^d)}^2\\
& \leq  \norm{P^{\frac{1}{2}}(I-\alpha P^{-1} \calL)P^{-\frac{1}{2}}}_{L^2(\bR^d) \gt L^2(\bR^d)}^2 \| P^{\frac{1}{2}}u \|_{L^2(\bR^d)}^2\\
& =  \norm{I - \alpha P^{-\frac{1}{2}}\calL P^{-\frac{1}{2}}}_{L^2(\bR^d) \gt L^2(\bR^d)}^2 \|u \|_{H^1(\bR^d)}^2.
\end{align*}
Notice that the operator $I - \alpha P^{-\frac{1}{2}}\calL P^{-\frac{1}{2}} $ is bounded and self-adjoint on $L^2(\bR^d)$ by Lemma~\ref{lem:bdd-selfadjoint}. Therefore  $\norm{I - \alpha P^{-\frac{1}{2}}\calL P^{-\frac{1}{2}}}_{L^2(\bR^d) \gt L^2(\bR^d)} = r(I - \alpha P^{-\frac{1}{2}}\calL P^{-\frac{1}{2}})$. In addition, thanks to Lemma~\ref{lem:radius}, 
$$
r(I - \alpha P^{-\frac{1}{2}}\calL P^{-\frac{1}{2}}) \leq w(I - \alpha P^{-\frac{1}{2}}\calL P^{-\frac{1}{2}}).
$$
By the definition of  numerical radius and the identity \eqref{eq:Phalf}, one has that 
\begin{equation*}
\begin{split}
    w(I - \alpha P^{-\frac{1}{2}}L P^{-\frac{1}{2}}) & = \sup_{\|u\|_{L^2(\bR^d)}=1}  \left|\left\langle(I - \alpha P^{-\frac{1}{2}}\calL P^{-\frac{1}{2}}) u, u\right\rangle_{L^2(\bR^d)}\right| \\
    & \leftstackrel{\tilde{u} = P^{-\frac{1}{2}}u}{=} \sup_{\|\tilde{u}\|_{H^1(\bR^d)}=1 } \left|1 - \alpha  \langle \calL\tilde{u}, \tilde{u}\rangle_{H^{-1}(\bR^d),H^1(\bR^d)}\right| \\
    & =  \sup_{\|\tilde{u}\|_{H^1(\bR^d)}=1 } \left| 1 - \alpha  \int_{\bR^d}( A \nabla \tilde{u} \cdot \nabla \tilde{u} + c |\tilde{u}|^2) dx\right|. 
\end{split}
\end{equation*}
Moreover, thanks to the positivity and boundedness assumptions on $A$ and $c$, we have for any $\tilde{u}$ with $\|\tilde{u}\|_{H^1(\bR^d)} = 1 $,
$$
\lambda_{\min} = \min\{a_{\min}, c_{\min}\}\leq  \int_{\bR^d} \nabla \tilde{u} \cdot A  \nabla \tilde{u} + c |\tilde{u}|^2 dx \leq \max\{a_{\max}, c_{\max}\}=\lambda_{\max}.
$$
Therefore we have obtained that 
$$
w(I - \alpha P^{-\frac{1}{2}}\calL P^{-\frac{1}{2}}) \leq \Lambda(\alpha).
$$
Combining the estimates above finishes the proof of the first inequality in \eqref{eq:contract}. Finally, the second inequality \eqref{eq:contract2} follows by optimizing the function $ \Lambda(\alpha)$ with respect to $\alpha>0$. In fact it is not hard to verify that  
$$ \inf_{\alpha >0} \Lambda(\alpha) = \Lambda(\alpha_\ast) = \frac{\lambda_{\max}-\lambda_{\min}}{\lambda_{\max}+\lambda_{\min}},
$$
where
$\alpha_\ast = \frac{2}{\lambda_{\min} + \lambda_{\max}}$.
\end{proof}

\begin{proof}[Proof of Corollary~\ref{cor:iterate-number}] 
It follows from \eqref{eq:update-rule} and Proposition~\ref{prop:contract} that
\begin{equation*}
\begin{split}
    \norm{u_{t+1}-u^*}_{H^1(\bR^d)}&=\norm{\left(I-\alpha_\ast (I-\Delta)^{-1}\calL\right)(u_t-u^*)}_{H^1(\bR^d)}\\
    &\leq \frac{\lambda_{\max}-\lambda_{\min}}{\lambda_{\max}+\lambda_{\min}}\norm{u_t-u^*}_{H^1(\bR^d)},
\end{split}
\end{equation*}
which then implies that
\begin{equation*}
    \norm{u_T-u^*}_{H^1(\bR^n)}\leq \left(\frac{\lambda_{\max}-\lambda_{\min}}{\lambda_{\max}+\lambda_{\min}}\right)^T \norm{u_0-u^*}_{H^1(\bR^d)}\leq \eps,
\end{equation*}
for $T$ satisfying \eqref{eq:iterate-number}.
\end{proof}

\section{Proofs for Section~\ref{sec:algebra-Barron}}
\label{sec:pf-alg-Barron}

In this section, we give proofs for Lemma~\ref{lem:alg-Barron}, Lemma~\ref{lem:preconditioning}, Lemma~\ref{lem:esti-utplus1}, and Lemma~\ref{lem:esti-utplus1-cos}. These lemmas show that the updating rule \eqref{eq:update-rule} keeps the iterates $\{u_t\}_{t\in\bN}$ staying in the Barron space and estimate the amplification of Barron norm after performing \eqref{eq:update-rule}.

\begin{proof}[Proof of Lemma~\ref{lem:alg-Barron}]
(i) (Addition) Let $\epsilon>0$ be fixed. For any $i\in\{1,2,\dots,k\}$, there exists a probability measure $\rho_i$ supported on $\bR\times\olB^d_{R_i}\times\bR$ such that
\begin{equation*}
    g_i(x)=\int a\sigma(w^{\top} x+b)\rho_i(da,dw,db),\quad x\in\bR^d,
\end{equation*}
and that
\begin{equation*}
    \int |a|\rho_i(da,dw,db)\leq\norm{g_i}_{\calB^1_{R_i}(\bR^d)}+\epsilon.
\end{equation*}
We have 
\begin{equation*}
\begin{split}
    (g_1+\dots+g_k)(x)&=\int a\sigma(w^{\top} x+b)(\rho_1+\dots+\rho_k)(da,dw,db)\\
    &=\int k a \sigma(w^{\top} x+b)\frac{\rho_1+\dots+\rho_k}{k}(da,dw,db).
\end{split}
\end{equation*}
Consider a function $ F:\bR\times \bR^d\times \bR\rightarrow \bR\times \bR^d\times \bR$, $(a,w,b)\mapsto(k a, w,b)$ and the corresponding push-forward measure $\rho=F_*\frac{\rho_1+\dots+\rho_k}{k}$. Noticing that $\rho$ is supported on $\bR\times \olB^d_R\times\bR$, where $R=\max_{1\leq i\leq k}R_i$, and that
\begin{equation*}
    (g_1+\dots+g_k)(x)=\int a\sigma(w^{\top} x+b)\rho(da,dw,db),\quad x\in\bR^d,
\end{equation*}
we obtain that
\begin{align*}
    \norm{g_1+\dots+g_k}_{\calB^1_R(\bR^d)}&\leq \int |a|\rho(da,dw,db)\\
    &=\int |k a|\frac{\rho_1+\dots+\rho_k}{k}(da,dw,db)\\
    &=\sum_{i=1}^k \int |a|\rho_i(da,dw,db)\\
    &\leq\sum_{1\leq i\leq k}\norm{g_i}_{\calB^1_{R_i}(\bR^d)}+k\epsilon.
\end{align*}
Then we can conclude that $\norm{g_1+\dots+g_k}_{\calB^1_R(\bR^d)}\leq \sum_{1\leq i\leq k}\norm{g_i}_{\calB^1_{R_i}(\bR^d)}$ by setting $\epsilon\gt 0$.

(ii) (Scalar multiplication) The result is trivial if $\lambda=0$. We then consider $\lambda\neq 0$. For any $\epsilon>0$, there exists a probability measure $\rho$ supported on $\bR\times \olB^d_R\times\bR$ such that
\begin{equation*}
    g(x)=\int a\sigma(w^{\top} x+b)\rho(da,dw,db),\quad x\in\bR^d,
\end{equation*}
and that
\begin{equation*}
    \int |a|\rho(da,dw,db)\leq\norm{g}_{\calB^1_R(\bR^d)}+\epsilon.
\end{equation*}
Then it holds that 
\begin{equation*}
    (\lambda g)(x)=\int \lambda a \sigma(w^{\top} x+b)\rho(da,dw,db)=\int a\sigma(w^{\top} x+b)\rho'(da,dw,db),
\end{equation*}
where $\rho'=F_*\rho$ is the push-forward measure and $F:\bR\times \bR^d\times \bR\gt \bR\times \bR^d\times \bR,\ (a,w,b)\mapsto(\lambda a,w,b)$. Since $\rho'$ is supported on $\bR\times\olB^d_R\times\bR$, we get that
\begin{equation*}
    \norm{\lambda g}_{\calB^1_R(\bR^d)}\leq \int |a|\rho'(da,dw,db)=\int |\lambda a|\rho(da,dw,db)\leq|\lambda|\norm{g}_{\calB^1_R(\bR^d)}+|\lambda|\epsilon,
\end{equation*}
which implies that $\norm{\lambda g}_{\calB^1_R(\bR^d)}\leq |\lambda|\norm{g}_{\calB^1_R(\bR^d)}$ by setting $\eps\gt 0$. Furthermore, we have that
\begin{equation*}
    \norm{\lambda g}_{\calB^1_R(\bR^d)}\leq |\lambda|\norm{g}_{\calB^1_R(\bR^d)}=|\lambda|\norm{\lambda^{-1}\cdot \lambda g}_{\calB^1_R(\bR^d)}\leq |\lambda\cdot\lambda^{-1}|\norm{ \lambda g}_{\calB^1_R(\bR^d)}=\norm{\lambda g}_{\calB^1_R(\bR^d)}.
\end{equation*}
Thus, the equalities must hold and $\norm{\lambda g}_{\calB^1_R(\bR^d)}= |\lambda|\norm{g}_{\calB^1_R(\bR^d)}$

(iii) (Product) Fix $\epsilon>0$. For $i\in\{1,2\}$, there exists a probability $\rho_i$ supported on $\bR\times\olB^d_{R_i}\times\bR$ such that \begin{equation*}
    g_i(x)=\int a\sigma(w^{\top} x+b)\rho_i(da,dw,db),\quad x\in\bR^d,
\end{equation*}
and that
\begin{equation*}
    \int |a|\rho_i(da,dw,db)\leq\norm{g_i}_{\calB^1_{R_i}(\bR^d)}+\epsilon.
\end{equation*}
According to Assumption~\ref{asp:multi}, there exists a probability measure $\mu$ supported on $\bR\times\olB^2_{R_m}\times\bR$ such that
\begin{equation*}
    \sigma(y_1)\sigma(y_2)=\int\gamma\sigma(\xi_1 y_1+\xi_2 y_2 +\eta)\mu(d\gamma,d\xi,d\eta),\quad y_1,y_2\in\bR,
\end{equation*}
where $\xi=(\xi_1,\xi_2)^{\top}\in\bR^2$, and
\begin{equation*}
    \int|\gamma|\mu(d\gamma,d\xi,d\eta)\leq\ell_m+\eps.
\end{equation*}
Recall that $\sup_{y\in\bR}|\sigma(y)|<\infty$. By Fubini's theorem, it holds for any $x\in\bR^d$ that
\begin{align*}
    g_1(x)g_2(x)=&\int a_1\sigma(w_1^{\top} x+b_1)\rho_1(da_1,dw_1,db_1)\int a_2\sigma(w_2^{\top} x+b_2)\rho_2(da_2,dw_2,db_2)\\
    =&\int a_1 a_2\sigma(w_1^{\top} x+b_1)\sigma(w_2^{\top} x+b_2)\rho_1\times\rho_2(da_1,dw_1,db_1,da_2,dw_2,db_2)\\
    =&\int a_1 a_2\int \gamma\sigma\left(\xi_1(w_1^{\top} x+b_1)+\xi_2(w_2^{\top} x+b_2)+\eta\right)\mu(d\gamma,d\xi,d\eta)\\
    &\qquad\qquad\rho_1\times\rho_2(da_1,dw_1,db_1,da_2,dw_2,db_2)\\
    =&\int a_1 a_2\gamma \sigma\left((\xi_1 w_1+\xi_2 w_2)^{\top} x+\xi_1 b_1+\xi_2 b_2+\eta\right)\\
    &\qquad\qquad \rho_1\times\rho_2\times \mu(da_1,dw_1,db_1,da_2,dw_2,db_2,d\gamma,d\xi,d\eta).
\end{align*}
Consider a function 
\begin{equation*}
\begin{split}
    F:\mathbb{R}\times \mathbb{R}^d\times \mathbb{R}\times \mathbb{R}\times \mathbb{R}^d\times \mathbb{R}\times \mathbb{R}\times \mathbb{R}^2\times \mathbb{R}&\rightarrow \mathbb{R}\times \mathbb{R}^d\times \mathbb{R},\\
    (a_1,w_1,b_1,a_2,w_2,b_2,\gamma,\xi,\eta)\qquad\quad &\mapsto\ \ (a',w',b'),
\end{split}
\end{equation*}
where
\begin{equation*}
    \begin{cases}
    a'=a_1 a_2\gamma,\\
    w'=\xi_1 w_1+\xi_2 w_2,\\
    b'=\xi_1 b_1+\xi_2 b_2+\eta.
    \end{cases}
\end{equation*}
The push-forward measure $\rho'=F_*(\rho_1\times \rho_2\times \mu)$ is supported on $\bR\times \olB^d_{R}\times\bR$ where $R=R_m (R_1+R_2)$ and it holds that
\begin{equation*}
    g_1(x)g_2(x)=\int a\sigma(w^{\top} x+b)\rho'(da,dw,db).
\end{equation*}
Therefore, we have
\begin{align*}
    \norm{g_1 g_2}_{\calB^1_{R}(\bR^d)}&\leq \int |a|\rho'(da,dw,db)\\
    &=\int |a_1 a_2 \gamma|\rho_1\times\rho_2\times \mu(da_1,dw_1,db_1,da_2,dw_2,db_2,d\gamma,d\xi,d\eta)\\
    &=\int|\gamma|\mu(d\gamma,d\xi,d\eta)\int |a_1|\rho_1(da_1,dw_1,db_1)\int |a_2|\rho_2(da_2,dw_2,db_2)\\
    &\leq(\ell_m+\eps)\left(\norm{g}_{\calB^1_{R_1}(\bR^d)}+\epsilon\right)\left(\norm{g}_{\calB^1_{R_2}(\bR^d)}+\epsilon\right),
\end{align*}
which then implies that $\norm{g_1 g_2}_{\calB^1_{R}(\bR^d)}\leq \ell_m \norm{g}_{\calB^1_{R_1}(\bR^d)}\norm{g}_{\calB^1_{R_2}(\bR^d)}$ as $\eps\gt 0$.

(iv) (Derivatives) For any $\epsilon>0$, there exists a probability measure $\rho$ supported on $\bR\times \olB^d_R\times\bR$ such that
\begin{equation*}
    g(x)=\int a\sigma(w^{\top} x+b)\rho(da,dw,db),\quad x\in\bR^d,
\end{equation*}
and that
\begin{equation*}
    \int |a|\rho(da,dw,db)\leq\norm{g}_{\calB^1_R(\bR^d)}+\epsilon.
\end{equation*}
According to Assumption~\ref{asp:deri}, there exist probability measures $\mu_1$ and $\mu_2$, supported on $\bR\times\olB^1_{R_{d,1}}\times\bR$ and $\bR\times\olB^1_{R_{d,2}}\times\bR$ respectively, such that
\begin{equation*}
    \sigma'(y)=\int\gamma\sigma(\xi y +\eta)\mu_1(d\gamma,d\xi,d\eta),\quad y\in\bR,
\end{equation*}
\begin{equation*}
    \sigma''(y)=\int\gamma\sigma(\xi y +\eta)\mu_2(d\gamma,d\xi,d\eta),\quad y\in\bR,
\end{equation*}
and
\begin{equation*}
    \int|\gamma|\mu_1(d\gamma,d\xi,d\eta)\leq\ell_{d,1}+\eps, \quad \int|\gamma|\mu_1(d\gamma,d\xi,d\eta)\leq\ell_{d,2}+\eps.
\end{equation*}
Recall that $\sup_{y\in\bR}|\sigma(y)|<\infty$, $\sup_{y\in\bR}|\sigma'(y)|<\infty$, $\sup_{y\in\bR}|\sigma''(y)|<\infty$, and $\norm{w}\leq R$ for $\rho$-a.e. $(a,w,b)$. It holds that
\begin{align*}
    \partial_i g(x)&=\int a\langle w, e_i\rangle\sigma'(w^{\top} x+b)\rho(da,dw,db)\\
    &=\int a \langle w, e_i\rangle\int \gamma\sigma(\xi (w^{\top} x+b)+\eta)\mu_1(d\gamma,d\xi,d\eta)\rho(da,dw,db)\\
    &=\int\gamma a \langle w, e_i\rangle \sigma((\xi w)^{\top} x+\xi b+\eta)\rho\times \mu_1(da,dw,db,d\gamma,d\xi,d\eta)\\
    &=\int a\sigma(w^{\top} x+b)\rho_1'(da,dw,db),
\end{align*}
and that
\begin{align*}
    \partial_i\partial_j g(x)&=\int a\langle w,e_i\rangle \langle w,e_j\rangle\sigma''(w^{\top} x+b)\rho(da,dw,db)\\
    &=\int a \langle w,e_i\rangle \langle w,e_j\rangle\int \gamma\sigma(\xi (w^{\top} x+b)+\eta)\mu_2(d\gamma,d\xi,d\eta)\rho(da,dw,db)\\
    &=\int\gamma a \langle w,e_i\rangle \langle w,e_j\rangle\sigma((\xi w)^{\top} x+\xi b+\eta)\rho\times \mu_2(da,dw,db,d\gamma,d\xi,d\eta)\\
    &=\int a\sigma(w^{\top} x+b)\rho_2'(da,dw,db),
\end{align*}
where $\rho_1'=F_*(\rho\times \mu_1)$ and $\rho_2'=G_*(\rho\times \mu_1)$ with $F(a,w,b,\gamma,\xi,\eta)=(\gamma a\langle w,e_i\rangle,\xi w,\xi b+\eta)$ and $G(a,w,b,\gamma,\xi,\eta)=(\gamma a\langle w,e_i\rangle\langle w,e_j\rangle,\xi w,\xi b+\eta)$. Note that $\rho_1'$ and $\rho_2'$ are supported on $\bR\times \olB^1_{R_{d,1} R}\times\bR$ and $\bR\times \olB^1_{R_{d,1} R}\times\bR$ respectively. Therefore, we obtain that
\begin{equation*}
\begin{split}
    \norm{\partial_i g}_{\calB^1_{R_{d,1} R}(\bR^d)}&\leq \int |a|\rho_1'(da,dw,db)\\
    &=\int |\gamma a\langle w,e_i\rangle|\rho\times \mu_1(da,dw,db,d\gamma,d\xi,d\eta)\\
    &\leq R \int |\gamma|\mu_1(d\gamma,d\xi,d\eta)\int |a|\rho(da,dw,db)\\
    &\leq R(\ell_{d,1}+\eps)\left(\norm{g}_{\calB^1_R(\bR^d)}+\epsilon\right),
\end{split}
\end{equation*}
and similarly,
\begin{align*}
    \norm{\partial_i\partial_j g}_{\calB^1_{R_{d,1} R}(\bR^d)}&\leq \int |a|\rho_2'(da,dw,db)\\
    &=\int |\gamma a\langle w,e_i\rangle\langle w,e_j\rangle|\rho\times \mu_2(da,dw,db,d\gamma,d\xi,d\eta)\\
    &\leq R^2 \int |\gamma|\mu_2(d\gamma,d\xi,d\eta)\int |a|\rho(da,dw,db)\\
    &=R^2(\ell_{d,2}+\eps)\left(\norm{g}_{\calB^1_R(\bR^d)}+\epsilon\right).
\end{align*}
Then we can obtain the desired estimates by letting $\eps\gt 0$.
\end{proof}

\begin{proof}[Proof of Lemma~\ref{lem:preconditioning}] 
This proof is modified from \cite{E20}. Consider the one-dimensional case $d=1$ first.
The Green's function $G(x)$ for the screened Poisson equation
\begin{equation*}
   \left(I-\frac{d^2}{dx^2}\right)u=g,\quad x\in\bR,
\end{equation*}
can be explicitly computed as
\begin{equation*}
    G(x)=\calF^{-1}\left(\frac{1}{1+\xi^2}\calF \delta_0\right)=\calF^{-1}\left(\frac{1}{1+\xi^2}\right)=\frac{1}{2}e^{-|x|},
\end{equation*}
which leads to$\int_{\bR} |G(x)|dx=1$. For any $\epsilon>0$, there exists a probability measure $\rho$ supported on $\bR\times \olB_R\times\bR$ such that
\begin{equation*}
    g(x)=\int a\sigma(w x+b)\rho(da,dw,db),\quad x\in\bR,
\end{equation*}
and that
\begin{equation*}
    \int |a|\rho(da,dw,db)\leq\norm{g}_{\calB^1_R(\bR)}+\epsilon.
\end{equation*}
It holds that
\begin{align*}
    \left(\left(I-\frac{d^2}{dx^2}\right)^{-1}g\right)(x)&=\int G(y)g(x-y)dy\\
    &=\int G(y)\int a\sigma(w (x-y)+b)\rho(da,dw,db)dy\\
    &=\int G(y)a\sigma(w x-w y+b)\rho(da,dw,db)dy\\
    &=\int a \sigma(w x+b)\rho'(da,dw,db),
\end{align*}
where $\rho'=F_*(\rho\times m)$ with $m$ being the Lebesgue measure and $F:\bR\times\bR\times\bR\times \bR\rightarrow \bR\times\bR\times\bR,\ (a,w,b,y)\mapsto(G(y)a,w,b-w y)$. Then $\rho'$ is supported on $\bR\times \olB_R\times \bR$ and by Fubini's theorem,
\begin{align*}
    \norm{\left(I-\frac{d^2}{dx^2}\right)^{-1} g}_{\calB^1_R(\bR)}&\leq \int |a|\rho'(da,dw,db)\\
    &=\int |G(y)a|\rho(da,dw,db)dy\\
    &=\int |G(y)|dy\int |a|\rho(da,dw,db)\\
    &\leq \norm{g}_{\calB^1_R(\bR)}+\epsilon,
\end{align*}
which leads to $\norm{\left(I-\frac{d^2}{dx^2}\right)^{-1} g}_{\calB^1_R(\bR)}\leq\norm{g}_{\calB^1_R(\bR)}$ as $\eps\gt 0$. For a general dimension $d\geq 1$, since the operator $I-\Delta$ and the Barron norm are invariant under any orthogonal transformation, for any $w\in\olB^d_R$ and $b\in \bR$, by the analysis for $d=1$, it holds that
\begin{equation*}
    \norm{(I-\Delta)^{-1}\sigma(w^{\top} \cdot +b)}_{\calB^1_R(\bR^d)}\leq \norm{\sigma(w^{\top} \cdot +b)}_{\calB^1_R(\bR^d)}\leq 1.
\end{equation*}
For any $\epsilon>0$, there exists a probability measure $\rho$ supported on $\bR\times \olB_R^d\times\bR$ such that
\begin{equation*}
    g(x)=\int a\sigma(w^{\top} x+b)\rho(da,dw,db),\quad x\in\bR^d,
\end{equation*}
and that
\begin{equation*}
    \int |a|\rho(da,dw,db)\leq\norm{g}_{\calB^1_R(\bR^d)}+\epsilon.
\end{equation*}
Therefore, by the Jensen's inequality for expectation and the convexity of the Barron norm, one has that 
\begin{equation*}
\begin{split}
    \norm{(I-\Delta)^{-1} g}_{\calB^1_R(\bR^d)}&=\norm{\int a (I-\Delta)^{-1} \sigma(w^{\top}\cdot +b)\rho(da,dw,db)}_{\calB^1_R(\bR^d)}\\
    &\leq \int |a| \norm{(I-\Delta)^{-1} \sigma(w^{\top}\cdot +b)}_{\calB^1_R(\bR^d)}\rho(da,dw,db)\\
    &\leq \int |a| \rho(da,dw,db)\\
    &\leq\norm{g}_{\calB^1_R(\bR^d)}+\epsilon,
\end{split}
\end{equation*}
which yields that $\norm{(I-\Delta)^{-1} g}_{\calB^1_R(\bR)}\leq\norm{g}_{\calB^1_R(\bR)}$ as $\eps\gt 0$.
\end{proof}

\begin{proof}[Proof of Lemma~\ref{lem:esti-utplus1}]
According to Lemma~\ref{lem:alg-Barron} (iv), we have
\begin{equation*}
    \norm{\partial_i A_{ij}}_{\calB^1_{R_{d,1} R_A}(\bR^d)}\leq\ell_{d,1}R_A \norm{A_{i,j}}_{\calB^1_{R_A}(\bR^d)},
\end{equation*}
\begin{equation*}
    \norm{\partial_j u_t}_{\calB^1_{R_{d,1}R_{u,t}}(\bR^d)}\leq \ell_{d,1}R_{u,t}\norm{u_t}_{\calB^1_{R_{u,t}}(\bR^d)},
\end{equation*}
and
\begin{equation*}
    \norm{\partial_{ij} u_t}_{\calB^1_{R_{d,2}R_{u,t}}(\bR^d)}\leq \ell_{d,2}R_{u,t}^2\norm{u_t}_{\calB^1_{R_{u,t}}(\bR^d)},
\end{equation*}
for any $1\leq i,j\leq d$. Then applying Lemma~\ref{lem:alg-Barron} (iii), we obtain that
\begin{equation*}
    \norm{c u_t}_{\calB^1_{R_m(R_{u,t}+R_c)}(\bR^d)}\leq \ell_m\norm{u_t}_{\calB^1_{R_{u,t}}(\bR^d)}\norm{c}_{\calB^1_{R_c}(\bR^d)}=\ell_m\ell_c\norm{u_t}_{\calB^1_{R_{u,t}}(\bR^d)},
\end{equation*}
\begin{equation*}
\begin{split}
    \norm{\partial_i A_{ij} \partial_j u_t}_{\calB^1_{R_m R_{d,1}(R_{u,t}+ R_A)}(\bR^d)}&\leq \ell_m \ell_{d,1}^2 R_A R_{u,t} \norm{A_{i,j}}_{\calB^1_{R_A}(\bR^d)}\norm{u_t}_{\calB^1_{R_{u,t}}(\bR^d)}\\
    &\leq \ell_m \ell_{d,1}^2 \ell_A R_A R_{u,t} \norm{u_t}_{\calB^1_{R_{u,t}}(\bR^d)},
\end{split}
\end{equation*}
and
\begin{equation*}
\begin{split}
    \norm{A_{ij}\partial_{ij} u_t}_{\calB^1_{R_m(R_{d,2}R_{u,t}+R_A)}(\bR^d)}&\leq \ell_m \ell_{d,2}R_{u,t}^2\norm{A_{ij}}_{\calB^1_{R_A}}\norm{u_t}_{\calB^1_{R_{u,t}}(\bR^d)}\\
    &\leq \ell_m \ell_{d,2}\ell_A R_{u,t}^2\norm{u_t}_{\calB^1_{R_{u,t}}(\bR^d)},
\end{split}
\end{equation*}
for any $1\leq i, j\leq d$. Therefore, one can estimate the Barron norm of $v_t$ by Lemma~\ref{lem:alg-Barron} (i):
\begin{equation*}
    \norm{v_t}_{\calB^1_{R_{v,t}}(\bR^d)}\leq\left( \ell_m \ell_A (\ell_{d,1}^2 R_A R_{u,t}+ \ell_{d,2} R_{u,t}^2) d^2+\ell_m \ell_c\right)\norm{u_t}_{\calB^1_{R_{u,t}}(\bR^d)}+\ell_f,
\end{equation*}
where
\begin{equation*}
    R_{v,t}=\max\{R_m R_{d,1}( R_{u,t}+ R_A), R_m(R_{d,2}R_{u,t}+R_A), R_m(R_{u,t}+R_c), R_f\}.
\end{equation*}
Then using Lemma~\ref{lem:preconditioning}, Lemma~\ref{lem:alg-Barron} (ii), and Lemma~\ref{lem:alg-Barron} (i), we can finally conclude \eqref{eq:esti-utplus1}.
\end{proof}

\begin{proof}[Proof of Lemma~\ref{lem:esti-utplus1-cos}]
Consider any $\epsilon>0$. There exists a probability measure $\rho$ supported on $\bR\times\olB^d_{R_{u,t}}\times\bR$, such that
\begin{equation*}
    u_t(x)=\int a\cos(w^{\top} x+b)\rho(da,dw,db),\quad x\in\bR^d,
\end{equation*}
and
\begin{equation*}
    \int |a|\rho(da,dw,db)\leq\norm{u_t}_{\calB^1_{R_t}(\bR^d)}+\epsilon.
\end{equation*}
Let us suppose that
\begin{equation*}
    A_{ij}(x)=\int a_{A,ij}\cos(w_{A,ij}^{\top} x+b_{A,ij})\rho_{A,ij} (da_{A,ij},dw_{A,ij},db_{A,ij}),\quad x\in\bR^d,
\end{equation*}
\begin{equation*}
    c(x)=\int a_c \cos(w_c^{\top} x+b_c)\rho_c(da_c, dw_c, db_c),\quad x\in\bR^d,
\end{equation*}
and
\begin{equation*}
    f(x)=\int a_f \cos(w_f^{\top} x+b_f)\rho_f(da_f, dw_f, db_f),\quad x\in\bR^d,
\end{equation*}
where the probability measures $\rho_{A,ij}$, $\rho_c$, and $\rho_f$ are supported on $\bR\times \olB^d_{R_A}\times\bR$, $\bR\times \olB^d_{R_c}\times\bR$, and $\bR\times \olB^d_{R_f}\times\bR$, respectively, and satisfy that
\begin{equation*}
    \int |a_{A,ij}|\rho_{A,ij}(da_{A,ij},dw_{A,ij},db_{A,ij})\leq\ell_A+\epsilon,
\end{equation*}
\begin{equation*}
    \int |a_c|\rho_c(da_c, dw_c, db_c)\leq\ell_c+\epsilon,
\end{equation*}
and
\begin{equation*}
    \int |a_f|\rho_f(da_f, dw_f, db_f)\leq\ell_f+\epsilon.
\end{equation*}
Then it holds that
\begin{equation*}
\begin{split}
    v_t(x)=&\calL u_t-f\\
    =&-\sum_{i,j}(\partial_i A_{ij}\partial_j u_t+A_{ij}\partial_{ij} u_t)+ cu_t- f\\
    =&-\sum_{i,j} \left(\int a_{A,ij}\langle w_{A,ij}, e_i\rangle\cos\left(w_{A,ij}^{\top} x+b_{A,ij}+\frac{\pi}{2}\right)d\rho_{A,ij}\right.\\
    &\qquad\qquad \ \cdot \int a\langle a,w_j\rangle \cos\left(w^{\top} x+b+\frac{\pi}{2}\right) d\rho \\
    &\qquad\qquad \left.+\int a_{A,ij}\cos(w_{A,ij}^{\top} x+b_{A,ij})d\rho_{A,ij}\int a\langle w,e_i\rangle\langle w,e_j\rangle \cos(w^{\top} x+b+\pi) d\rho\right)\\
    &+\int a_c \cos(w_c^{\top} x+b_c)d\rho_c \int a\cos(w^{\top} x+b)d\rho\\
    &+f(x)=\int a_f \cos(w_f^{\top} x+b_f)d\rho_f\\
    =&-\sum_{i,j}\left(\frac{a_{ A,ij} a \langle w_{A,ij}, e_i\rangle \langle w,e_j\rangle}{2}\cos\left((w_{A,ij}+w)^{\top} x+b_{A,ij}+b+\pi\right)d\rho_{A,ij}\times d\rho\right.\\
    &\qquad \qquad +\frac{a_{ A,ij} a \langle w_{A,ij}, e_i\rangle \langle w,e_j\rangle}{2}\cos\left((w_{A,ij}-w)^{\top} x+b_{A,ij}-b\right)d\rho_{A,ij}\times d\rho\\
    &\qquad \qquad +\frac{a_{A,ij} a \langle w,e_i\rangle\langle w,e_j\rangle}{2}\cos\left((w_{A,ij}+w)^{\top} x+b_{A,ij}+b+\pi\right)d\rho_{A,ij}\times d\rho\\
    &\qquad \qquad \left.+\frac{a_{A,ij} a \langle w,e_i\rangle\langle w,e_j\rangle}{2}\cos\left((w_{A,ij}-w)^{\top} x+b_{A,ij}-b-\pi\right)d\rho_{A,ij}\times d\rho\right)\\
    &+\frac{a_c a}{2}\cos\left((w_c+w)^{\top} x+b_c+b\right)d\rho_c\times d\rho\\
    &+\frac{a_c a}{2}\cos\left((w_c-w)^{\top} x+b_c-b\right)d\rho_c\times d\rho\\
    &-a_f\cos(w_f^{\top} x +b_f)d\rho_f.
\end{split}
\end{equation*}
Let us denote
\begin{align*}
    \tilde{v}_t(x)=&-\sum_{i,j}\left(\frac{a_{ A,ij} a \langle w_{A,ij}, e_i\rangle \langle w,e_j\rangle}{2(1+\norm{w_{A,ij}+w}^2)}\cos\left((w_{A,ij}+w)^{\top} x+b_{A,ij}+b+\pi\right)d\rho_{A,ij}\times d\rho\right.\\
    &\qquad \qquad +\frac{a_{ A,ij} a \langle w_{A,ij}, e_i\rangle \langle w,e_j\rangle}{2(1+\norm{w_{A,ij}-w}^2)}\cos\left((w_{A,ij}-w)^{\top} x+b_{A,ij}-b\right)d\rho_{A,ij}\times d\rho\\
    &\qquad \qquad +\frac{a_{A,ij} a \langle w,e_i\rangle\langle w,e_j\rangle}{2(1+\norm{w_{A,ij}+w}^2)}\cos\left((w_{A,ij}+w)^{\top} x+b_{A,ij}+b+\pi\right)d\rho_{A,ij}\times d\rho\\
    &\qquad \qquad \left.+\frac{a_{A,ij} a \langle w,e_i\rangle\langle w,e_j\rangle}{2(1+\norm{w_{A,ij}-w}^2)}\cos\left((w_{A,ij}-w)^{\top} x+b_{A,ij}-b-\pi\right)d\rho_{A,ij}\times d\rho\right)\\
    &+\frac{a_c a}{2(1+\norm{w_c+w}^2)}\cos\left((w_c+w)^{\top} x+b_c+b\right)d\rho_c\times d\rho\\
    &+\frac{a_c a}{2(1+\norm{w_c-w}^2)}\cos\left((w_c-w)^{\top} x+b_c-b\right)d\rho_c\times d\rho\\
    &-\frac{a_f}{1+\norm{w_f}^2}\cos(w_f^{\top} x +b_f)d\rho_f.
\end{align*}
It is straightforward to verify  that $v_t,\tilde{v}_t\in L^\infty(\bR^n)$ with $(I-\Delta)\tilde{v}_t=v_t$. Note that the PDE $(I-\Delta)u=v_t$ has a unique solution in $\calS_d'$, the space of tempered distributions, since the solution $u$ can be expressed in terms of the inverse Fourier transform of $v_t$, i.e.  $u=(I-\Delta)^{-1}v_t=\calF^{-1}\left(\frac{1}{1+\norm{\xi}^2}\calF v_t\right)$. Therefore the uniqueness implies that $\tilde{v}_t=(I-\Delta)^{-1} v_t$.

According to the support of $\rho_{A,ij}$, we only need to consider $w_{A,ij}$ with $\norm{w_{A,ij}}\leq R_A$. For any $w$, if $\norm{w}\geq 2 R_A$, then
\begin{equation*}
    \left|\frac{a_{ A,ij} a \langle w_{A,ij}, e_i\rangle \langle w,e_j\rangle}{2(1+\norm{w_{A,ij}\pm w}^2)}\right|\leq \left|\frac{a_{\partial A,ij} a \langle w_{A,ij}, e_i\rangle \langle w,e_j\rangle}{\frac{1}{2}\norm{w}^2}\right|\leq |a_{A,ij} a|.
\end{equation*}
On the contrary, if $\norm{w}< 2 R_A$, then
\begin{equation*}
    \left|\frac{a_{ A,ij} a \langle w_{A,ij}, e_i\rangle \langle w,e_j\rangle}{2(1+\norm{w_{A,ij}\pm w}^2)}\right|\leq \frac{2 R_A^2}{2}|a_{A,ij} a|=R_A^2 |a_{A,ij} a|.
\end{equation*}
Combining the above two cases, we obtain that
\begin{equation}\label{eq:uniform-w}
    \left|\frac{a_{ A,ij} a \langle w_{A,ij}, e_i\rangle \langle w,e_j\rangle}{2(1+\norm{w_{A,ij}\pm w}^2)}\right|\leq \max\{R_A^2,1\}\cdot |a_{A,ij} a|,\quad\forall\ w\in\bR^d,\ w_{A,ij}\in\olB^d_{R_A}. 
\end{equation}
Similarly, we also have that
\begin{equation}\label{eq:uniform-w2}
    \left|\frac{a_{A,ij} a \langle w,e_i\rangle\langle w,e_j\rangle}{2(1+\norm{w_{A,ij}\pm w}^2)}\right|\leq 2\max\{R_A^2,1\} \cdot |a_{A,ij} a|,\quad\forall\ w\in\bR^d,\ w_{A,ij}\in\olB^d_{R_A}.
\end{equation}
Using \eqref{eq:uniform-w}, \eqref{eq:uniform-w2}, and Lemma~\ref{lem:alg-Barron} (i), we can estimate the Barron norm of $\tilde{v}_t=(I-\Delta)^{-1} v_t$ as follows
\begin{align*}
    \norm{(I-\Delta)^{-1} v_t}_{\calB^1_{R_{\tilde{v},t}}(\bR^d)}\leq& 6 d^2 \max\{R_A^2,1\} \int |a_{A,ij}|d\rho_{A,ij}\int|a|d\rho\\
    &+\int |a_c|d\rho_c\int |a|d\rho+\int |a_f|d\rho_f\\
    \leq&6 d^2 \max\{R_A^2,1\}(\ell_A+\eps)\left(\norm{u_t}_{\calB^1_{R_{u,t}}(\bR^d)}+\eps\right)\\
    &+(\ell_c+\eps)\left(\norm{u_t}_{\calB^1_{R_{u,t}}(\bR^d)}+\eps\right)+(\ell_f+\eps),
\end{align*}
where $R_{\tilde{v},t}=R_{u,t}+\max\{R_A,R_c,R_f\}$. The estimate above directly  implies that
\begin{equation*}
    \norm{(I-\Delta)^{-1} v_t}_{\calB^1_{R_{\tilde{v},t}}(\bR^d)}\leq \left(6 \ell_A \max\{R_A^2,1\}d^2 +\ell_c\right)\norm{u_t}_{\calB^1_{R_{u,t}}(\bR^d)}+\ell_f.
\end{equation*}
Then we can get \eqref{eq:esti-utplus1-cos} by applying Lemma~\ref{lem:alg-Barron} (i)-(ii).
\end{proof}

\end{document}